\definecolor{dGREEN}{rgb}{0.0,0.5,0.5}
\numberwithin{equation}{section}
\newtheorem{thm}{Theorem}[section]
\newtheorem{lemma}[thm]{Lemma}
\newtheorem{prop}[thm]{Proposition}
\newtheorem{definition}[thm]{Definition}
\begin{document}
\title{On a dryout point for a stationary incompressible thermal fluid\\ with phase transition in a pipe}
\author{Yoshikazu Giga \\ Graduate School of Mathematical Sciences \\ The University of Tokyo \\ labgiga@ms.u-tokyo.ac.jp \and 
Zhongyang Gu \\ Graduate School of Mathematical Sciences \\ The University of Tokyo \\ zgu@ms.u-tokyo.ac.jp}
\date{}
\maketitle

\begin{abstract}
A dryout point is recognized as the position where the phase transition from liquid to vapor occurs.
In the one-dimensional case, by solving the stationary incompressible Navier-Stokes-Fourier equations with phase transition, we derive a necessary and sufficient condition for a dryout point to exist when the temperature at the liquid-vapor interface is given.
In addition, we show by considering thermodynamics that the temperature at the dryout point and the density of the vapor phase can be determined by given density and sufficiently small injected mass flux of the liquid phase.
\end{abstract}

\begin{center}
Keywords: One-dimensional Stefan problem, Dryout point, Interface temperature.
\end{center}

\thispagestyle{empty}





\section{Introduction} \label{S1} 
We consider a pipe and a liquid (thermal) fluid is injected from one side of the pipe.
 We heat the pipe so that the liquid becomes a vapor.
 This type of phase transition is often appeared in industries, for example, in air conditioners.
 A steady behavior is well studied in engineering as forced convention boiling.
 The two-phase flow patterns depend on injected fluid speed, temperature and external heat from the wall; see e.g.\ G.\ F.\ Naterer \cite[S.\ 4.5.5]{Na}.
 The pattern may be affected by gravity so vertical flow and horizontal flow are discussed separately.
 However, its mathematical understanding from
 fundamental macroscopic equations like the Navier-Stokes-Fourier system for two-phase flows with phase transition proposed by \cite{PS} and earlier by \cite{IH} is still fundamentally lacking.

Among many interesting problems, in this paper we are interested in a place of the pipe where no liquid phase attached to the wall of pipe remains.
 The first place such phenomenon occurs is often called a dryout point and its location is important to control phenomena.
 The purpose of this paper is to study a dryout point from fundamental macroscopic equations; see e.g.\ \cite{PS} or \cite{PSh}.
 We consider the one-dimensional Navier-Stokes-Fourier equations for completely incompressible two-phase fluid with phase transition when the densities of phases are quite different from each other.
 We consider its stationary version and define a dryout point as a phase boundary.
 We give a simple necessary and sufficient condition for the existence of a dryout point by using latent heat, external heat, the mass flux of injected fluid and its temperature as well as thermal conductivity and specific heat at constant volume at least when the mass flux of injected fluid is sufficiently small.
By thermodynamical relation, we show that no phase transition occurs when the mass flux of injected fluid is large at least for the van der Waals' energy. 
We also give a way to calculate the location of the dryout point.
Since the problem is reduced to an ordinary differential equation for temperature, analysis itself is very easy.
However, as far as the authors know, this seems to be the first study in this direction.

Let us explain our specific setting.
 We consider a pipe
\[
	\Omega = (0,L) \times D = \left\{ (x_1,x') \bigm| 
	0 < x_1 < L,\ x' \in D \right\},
\]
where $D$ is a bounded domain in $\mathbf{R}^{n-1}$.
 A thermal fluid is injected from the entrance $\Omega_{\mathrm{in}}=\{0\}\times\Omega$.
 The wall $S=(0,L)\times\partial D$ is heated with heat flux $Q$.
 There are various types of ``steady flow'' \cite[S.\ 4.5.5]{Na}.
 For example, Figure \ref{An} is a schematic picture of annular flow.
\begin{figure}[htb]
\centering 
\includegraphics[keepaspectratio, scale=0.25]{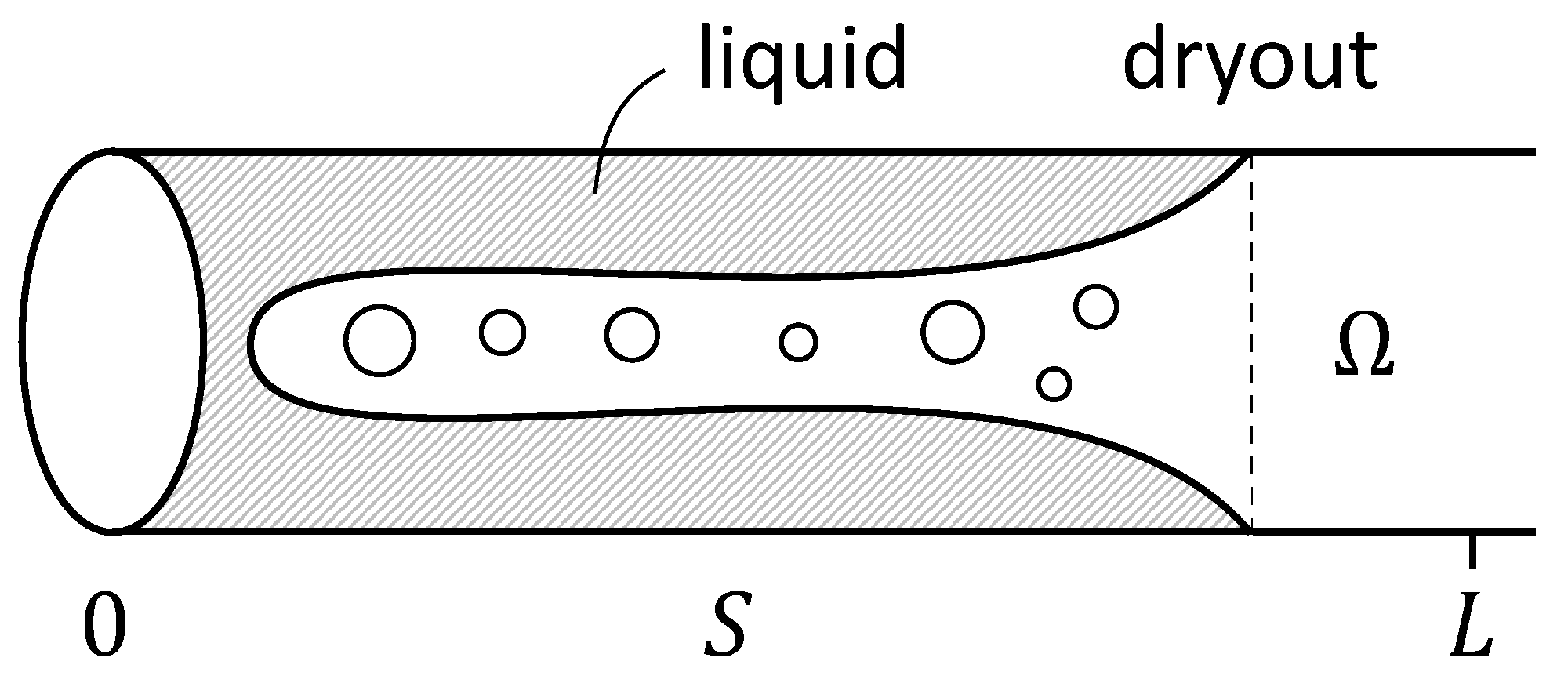}
\caption{an annular flow with a dryout point\label{An}}
\end{figure}
 The liquid region consists of a rather thin layer near the wall.
 If the external heat $Q$ is rather large, it is expected that there is no liquid region in $(L_0,L)\times D$ and the infimum of such $L_0$ is called a dryout point.

We consider the heat equation
\[
	\rho_1 \kappa_1 \left(\partial_t \theta_1 + u_1 \cdot \nabla\theta_1 \right) - d_1 \Delta\theta_1 = 0
\]
for the temperature $\theta_1$ in a liquid region.
 Here, $\rho_1$, $\kappa_1$, $d_1$, $u_1$ denote the density, the specific heat at constant volume, the heat conductivity and the velocity of the liquid phase, respectively.
We assume that $\rho_1$, $\kappa_1$, $d_1$ are positive constants.
We use the standard notation $\partial_t=\partial/\partial t$, $\nabla=(\partial_{x_1},\ldots,\partial_{x_n})$ and $\partial_{x_i}=\partial/\partial_{x_i}$.
 On the wall $S$ we impose
\[
	d_1 \frac{\partial\theta_1}{\partial\nu_{\partial\Omega}} = Q,
\]
where $\nu_{\partial\Omega}$ denotes the exterior normal of $\partial\Omega$.
 We ignore detailed descriptions of the annular flow and postulate that it occupies $(0,L_0)\times D$ since the pipe is sufficiently thin compared with the speed.
 We further assume that $u_1$ is a constant vector in the direction of $x_1$.
 We still write this constant vector $(u_1,0,0)$ by abusing the notation.
 If we take  average over $D$, we end up with
\begin{equation} \label{EAv}
	\rho_1 \kappa_1 \left(\partial_t \overline{\theta_1} + u_1 \partial_{x_1} \overline{\theta_1} \right)
	- d_1 \partial_{x_1}^2 \overline{\theta_1}
	- \frac{1}{|D|} \int_{\partial D} Q\; d\mathcal{H}^{n-2} = 0
\end{equation}
since 
\[
	\int_D \Delta' \theta_1\; dx'
	= \int_{\partial D} \frac{\partial\theta_1}{\partial\nu_{\partial D}}\; d\mathcal{H}^{n-2}
	= \frac{1}{d_1} \int_{\partial D} Q\; d\mathcal{H}^{n-2},
\]
where $\mathcal{H}^m$ denotes $m$-dimensional Hausdorff measure.
 Here $\overline{\theta_1}$ denotes the average of $\theta_1$ on $D$, i.e.,
\[
	\overline{\theta_1}(x_1) = \frac{1}{|D|} \int_D \theta_1 (x_1,x')\; dx',
\]
where $|D|$ denotes the area of $D$, i.e., $|D|=\mathcal{H}^{n-1}(D)$.
 The equation \eqref{EAv} becomes an ordinary differential equation if we consider its stationary problem.
 Its explicit form is
\[
	\rho_1 \kappa_1 u_1 \partial_{x_1} \overline{\theta_1}
	- d_1 \partial_{x_1}^2 \overline{\theta_1} = \rho_1 r_1
\]
with $r_1=\int_{\partial D}Q\;d\mathcal{H}^{n-2}\Bigm/\left(|D|\rho_1\right)$.
 Note that the heat from the wall is considered as internal heat source in this setting.
 Although this averaging process is too crude because there exists vapor region even for $x_1<L$, it is very instructive to consider one-dimensional setting to formulate a dryout point.

In this paper, we first consider the one-dimensional Navier-Stokes-Fourier equations for completely incompressible two-phase fluid with phase transition in a pipe $(0,L)$ when the liquid phase is injected from $x=0$.
 We consider internal heat source.
 If the internal heat source is large, it is expected that a vapor phase will appear.
 The question is whether or not there is a point $x_*\in(0,L)$ such that the liquid phase occupies in $\Omega_1=(0,x_*)$ and the vapor phase occupies in $(x_*,L)$ when one considers its stationary problem.
 The point $x_*$ is often called a \emph{dryout point}.
 Since the density and the velocity is constant on each phase, the problem is reduced to the two-phase stationary Stefan problem.
 However, one should note that the temperature at the interface $x_*$ is determined by thermodynamical relation at the interface.

Our goal in this paper is to give conditions for the existence of a dryout point as well as to derive its location if it exists.
 We give the density $\rho_1$ of the liquid phase and the (injected) mass flux $j_\Gamma=\rho_1u_1$, where $u_1$ is the velocity of the liquid phase which is a positive constant.
 The temperature $\theta_\mathrm{in}$ at the entrance is given which is assumed to be lower than the boiling temperature $\theta_b$ at the density $\rho_1$.
We first determine temperature $\theta_*$ at the interface $x=x_*$ and show that the density of the vapor phase is uniquely determined by $j_\Gamma$ provided that $j_\Gamma$ is sufficiently small.
 Moreover, we show that the interface temperature $\theta_*$ increases as $j_\Gamma$ increases.
 This is carried out a simple application of an implicit function theorem but the relations of pressures of both phases with $j_\Gamma\neq0$ seem to be not well-studied in the literature.
 In fact, we rewrite the Gibbs-Thomson relation and the momentum balance on the interface with no viscous stress and no surface tension effect and derive two equations for pressures of both phases.
 We also note that there is a chance that the stationary phase transition cannot occur when $j_\Gamma$ is too large.
 For this purpose, we introduce a modified mass specific Helmholtz energy
\[
	\psi^j(\rho,\theta) = \psi(\rho,\theta) - \frac{1}{2\rho^2} j_\Gamma^2
\]
and its corresponding pressure
\[
	p^j = p + \frac1\rho j_\Gamma^2,
\]
where $\psi$ denotes the mass specific Helmholtz energy and $p$ denotes the pressure defined by $p=\rho^2\partial_\rho\psi$ with density $\rho$.
Using these quantities, the momentum balance can be written as $\llbracket p^j\rrbracket=0$, while the Gibbs-Thomson relation can be written as $\left\llbracket \partial_\rho(\rho \psi^j)\right\rrbracket=0$, where $\llbracket\cdot\rrbracket$ denotes the jump between one-phase to the other.
This interpretation is a key to prove that there are no two different phases satisfying $\llbracket p^j\rrbracket=0$, $\left\llbracket \partial_\rho(\rho \psi^j)\right\rrbracket=0$ if $j_\Gamma^2$ is very large at least for the Van der Waal's energy.

We next assume that the interface temperature $\theta_*$ is given and $\theta_\mathrm{in}<\theta_*$.
We consider a semi-infinite pipe, i.e., $L=\infty$ and assume the derivative of the temperature of vapor
is bounded in $x_1$ as $x_1\to\infty$.
Then,
\begin{equation} \label{EDry} 
	(-\ell) \leq \frac{d_2 r}{\kappa_2 j_\Gamma^2}
\end{equation}
if and only if the dryout point $x_*$ exists.
 Here $-\ell$ denotes the latent heat (which is usually assumed to be positive) and $\kappa_2$, $d_2$ denote the specific heat at constant volume and the thermal conductivity of vapor phase, respectively.
 The quantity $r$ denotes the volume specific internal heat source, which is assumed to be a constant (independent of $x_1$).
 In the averaging problem, $r$ corresponds to the quantity
\[
	\int_{\partial D} Q\; d\mathcal{H}^{n-2} \biggm/ |D|.
\]
Mathematically, the problem is reduced to the one-dimensional stationary Stefan problem if the interface temperature is given.
This is a second-order ordinary differential equation with free boundary and it is easy to solve relatively explicitly.
The condition seems to be quite natural.
For example, if the mass flux is strong, then a dryout point cannot exist and liquid should stay as a liquid.
Note that the condition \eqref{EDry} is independent of the temperature at the entrance.
It is also independent of the thermal conductivity and the specific heat at constant volume of liquid phase in our model.

There is a large literature on the well-posedness if the initial boundary value problem for the Stefan problem; see e.g.\ \cite{V}, \cite{T}.
However, if we include the convection term, there seem to exist less number of articles.
The reader is referred to a recent article \cite{BCD} on this topic as well as more classical article \cite{BP} (using a renormalizing solution).
In \cite{BCD}, the convective term is also coupled with the Navier-Stokes equations with equal densities and viscosities at two-phases.
The stationary problem seems to be less studied except \cite{BL}, where the setting is quite different from ours.
The first author thanks Professor Danielle Hilhorst for letting him know the paper \cite{BCD}.

This paper is organized as follows.
 In section \ref{S2}, we recall the two-phase Navier-Stokes-Fourier system with phase transition.
 We derive one-dimensional stationary problem.
 In section \ref{S3}, we recall thermodynamics and derive temperature at liquid-vapor interface.
 In section \ref{S4}, we derive the necessary and sufficient condition such that the dryout point exists by a simple analysis of ODEs.

\section{Sharp interface model}\label{S2} 

We consider a domain $\Omega$ in $\mathbf{R}^n$, where a fluid occupies.
 We postulate that the fluid has two phases and it is bounded by an interface $\Gamma(t)$.
 More precisely, let $\Omega_i(t)$ ($i=1,2$) be an open subset of $\Omega$ bounded by a smooth hypersurface $\Gamma(t)$ such that $\Omega_1(t)\cup\Omega_2(t)\cup\Gamma(t)=\Omega$.
 Let $\nu_\Gamma$ be the unit normal vector field of $\Gamma(t)$ pointing from $\Omega_1(t)$ to $\Omega_2(t)$; see Figure \ref{Fp}.
\begin{figure}[htb]
\centering 
\includegraphics[keepaspectratio, scale=0.22]{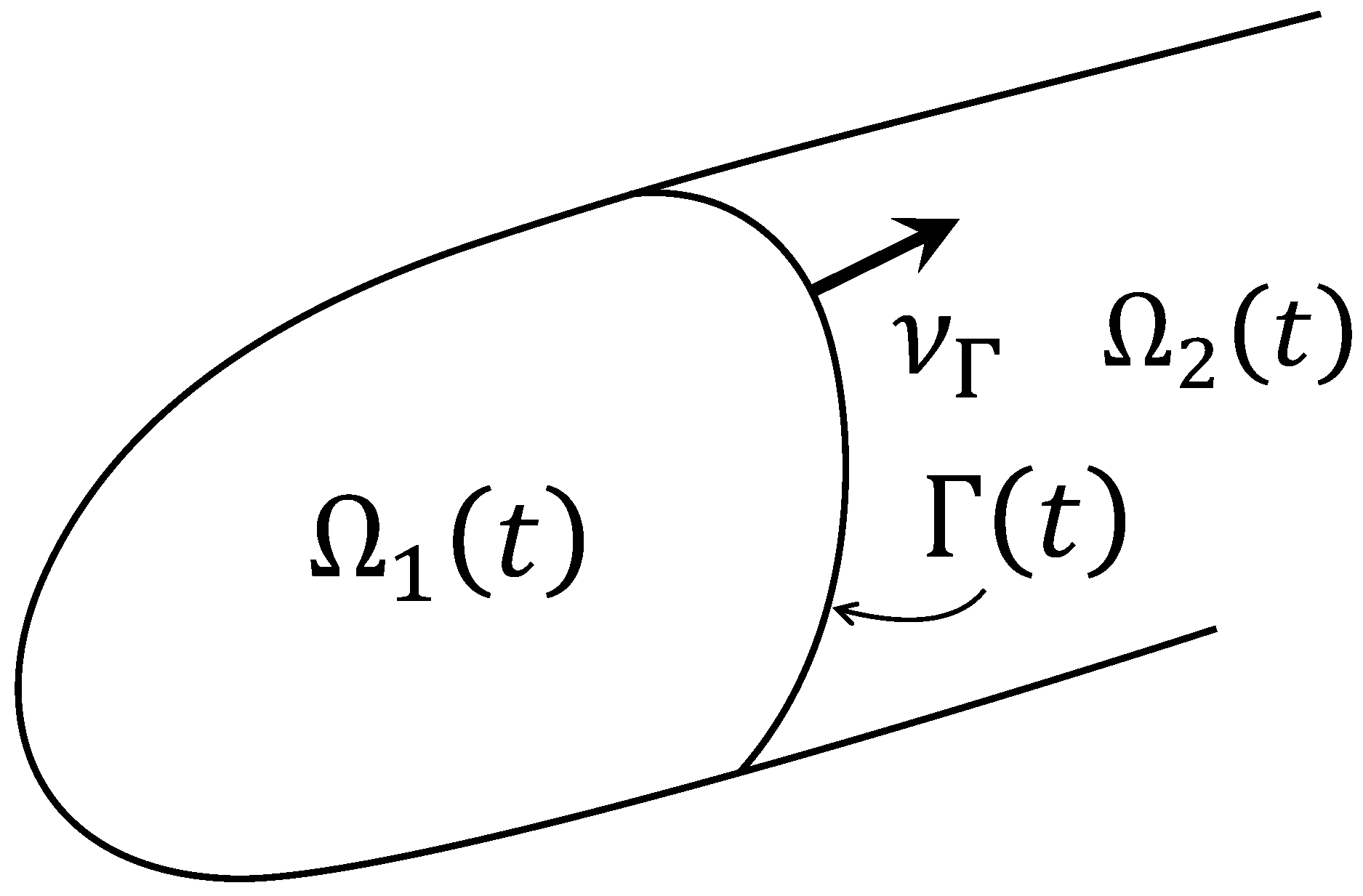}
\caption{two phases and interface in a pipe\label{Fp}}
\end{figure}
We postulate that one phase, say liquid phase of the fluid occupies in $\Omega_1(t)$ and the other phase, say vapor phase, occupies in $\Omega_2(t)$.
For a quantity $\phi$ defined in $\Omega\backslash\Gamma$, we define its jump $\llbracket\phi\rrbracket$ along $\Gamma$ by
\[
	\llbracket\phi\rrbracket(x) = \phi_2(x) - \phi_1(x), \quad
	\phi_1(x) = \lim_{\delta\downarrow0}\phi(x-\delta\nu_\Gamma), \quad
	\phi_2(x) = \lim_{\delta\downarrow0}\phi(x+\delta\nu_\Gamma) \quad \text{for} \quad x \in \Gamma(t).
\]

We consider a density field $\rho=\rho(x,t)$, a velocity field $u=u(x,t)$ and temperature field $\theta=\theta(x,t)$ to describe thermal fluid occupies in $\Omega$.
 Its restriction on each phase $\Omega_i$ is denoted by $\rho_i$, $u_i$, $\theta_i$ instead of $\rho$, $u$, $\theta$.
 We first recall the thermodynamical quantities of each phase.
 Let $\psi=\psi(\rho,\theta)$ denote the mass specific Helmholtz energy.
 It may be a different function on each phase.
 In $\Omega_i$, the mass specific Helmholtz energy is denoted by $\psi_i$ for $i=1,2$.
 We set
\begin{equation} \label{EG1}
	\eta_i = - \frac{\partial\psi_i}{\partial\theta}, \quad
	p_i = \rho^2 \frac{\partial\psi_i}{\partial\rho}.
\end{equation}
The quantity $\eta_i$ represents the mass specific entropy while $p_i$ represents the (thermodynamical) pressure.
 The mass specific internal energy $\epsilon_i$ is defined as
\[
	\epsilon_i = \psi_i + \theta\eta_i.
\]
A direct calculation shows that the relation \eqref{EG1} is equivalent to the Gibbs equation
\[
	\theta D\eta_i(\rho,\theta) = D\epsilon_i(\rho,\theta)
	+ p_i(\rho,\theta) D(1/\rho),
\]
where $D$ denotes the gradient with respect to $(\rho,\theta)$ variable.
 The quantity
\[
	\kappa_i = \frac{\partial\epsilon_i}{\partial\theta} 
\]
is called the specific heat at constant volume.

Following \cite[1.1]{PS} or \cite{PSh}, we recall conservation laws in $\Omega$.
 The mass conservation is of the form
\begin{equation} \label{EM}
	\partial_t \rho + \operatorname{div}(\rho u) = 0
	\quad\text{in}\quad \Omega\backslash\Gamma(t),
\end{equation}
where $\partial_t=\partial/\partial t$.
 The momentum conservation law with no external force is of the form 
\begin{equation} \label{EMo}
	\partial_t(\rho u) + \operatorname{div}(\rho u\otimes u) - \operatorname{div} T = 0
	\quad\text{in}\quad \Omega\backslash\Gamma(t).
\end{equation}
The energy conservation law for the energy $E=\frac12\rho|u|^2+\rho\epsilon$ is of the form
\begin{equation} \label{EE}
	\partial_t E + \operatorname{div}(Eu) - \operatorname{div} (Tu-q) = \rho r
	\quad\text{in}\quad \Omega\backslash\Gamma(t).
\end{equation}
Here $T$ denotes the symmetric stress tensor and $q$ denotes the heat flux.
 We consider a heat source $r=r_i$ on each $\Omega_i$.
 We postulate that the fluid is complete incompressible in both phases.
 Namely, $\rho_i$ is independent of $x$ and $t$.
 Then \eqref{EM} and \eqref{EMo} becomes 
\begin{equation} \label{ENS}
	\operatorname{div} u = 0, \quad
	\rho \left(\partial_t u+(u\cdot\nabla)u \right) -\operatorname{div} T = 0
	\quad\text{in}\quad \Omega\backslash\Gamma(t).
\end{equation}
We next postulate that
\[
	T = S - \pi I,
\]
where $S$ denotes the viscous stress and $\pi$ denotes a dynamic pressure.
 We also postulate that this $\pi=\pi_i$ on each phase $\Omega_i$ must equal the thermodynamic pressure on $\Gamma$.
 In other words, 
\begin{align} 
	&\pi_1 = p_1, \quad \pi_2 = p_2 
	\quad\text{on}\quad \Gamma(t),\quad\text{i.e.,} \label{EPr} \\
	&\lim_{\delta\downarrow0} \left(\pi_1(x-\nu_\Gamma\delta,t) - p_1(x-\nu_\Gamma\delta,t)\right) = 0,\quad
	\lim_{\delta\downarrow0} \left(\pi_2(x+\nu_\Gamma\delta,t) - p_2(x+\nu_\Gamma\delta,t)\right) = 0. \notag
\end{align}
For the heat flux, we postulate Fourier's law, namely,
\begin{equation} \label{EF}
	q_i = -d_i\nabla\theta_i
\end{equation}
with $d_i\geq0$ (so that it is thermodynamically consistent).
 Admitting
\eqref{EM} and \eqref{EMo}, the equation \eqref{EE} becomes
\[
	\rho_i \left(\partial_t\epsilon_i + (u\cdot\nabla) \epsilon_i\right)
	+ \operatorname{div} q_i - T_i:\nabla u_i = \rho_i r_i
	\quad\text{in}\quad \Omega_i
\]
for $i=1,2$, where $A:B=\operatorname{trace}(A^TB)$ for matrices $A$ and $B$.
 If the fluid is completely incompressible so that $\operatorname{div}u=0$, this equation becomes a kind of heat equation
\begin{equation} \label{EH1}
	\kappa_i \rho_i (\partial_t \theta_i + u_i\cdot\nabla\theta_i) -\operatorname{div} (d_i\nabla\theta_i) - S_i: \nabla u_i = \rho_i r_i
	\quad\text{in}\quad \Omega_i
\end{equation}
if we admit \eqref{EF}.
 We consider this problem in one-dimensional setting so that $u_i$ is spatially constant by $\operatorname{div}u_i=0$.
 Thus the equation becomes the heat equation with convective term, i.e.,
\begin{equation} \label{EH2}
	\kappa_i \rho_i (\partial_t \theta_i + u_i \partial_{x_1} \theta_i) - \partial_{x_1} (d_i \partial_{x_1} \theta_i) = \rho_i r_i
	\quad\text{in}\quad \Omega_i \quad (i=1,2),
\end{equation}
where $\Omega_1(t)=\left(0,x(t)\right)$, $\Omega_2(t)=\left(x(t),L\right)$, $\Gamma(t)=\left\{x(t)\right\}$.
 Here is a set of assumptions.
\begin{enumerate}
\item[(A1)] $\rho_i$ is a positive constant (independent of $x$ and $t$) for $i=1,2$ so that the fluid is completely incompressible;
\item[(A2)] $u_i$ is a scalar constant (independent of $x$ and $t$) for $i=1,2$.
\end{enumerate}
If we assume that the viscous stress $S_i=0$ for $i=1,2$, where velocity is spatially constant, $(\rho_i,u_i,\pi_i)$ solves \eqref{ENS} under (A1) and (A2) with constant $\pi_i$.

We next recall conditions on the interface $\Gamma(t)$.
 We go back to the case of general dimensions.
 It is convenient to introduce the notion of phase flux as in \cite{PS}.
 On $\Gamma(t)$, we set
\begin{align*}
	j_\Gamma^1(x,t) &= \lim_{\delta\downarrow0} \left((\rho_1 u_1)(x-\delta\nu_\Gamma,t) \cdot \nu_\Gamma(x,t)
	- \rho_1(x-\delta\nu_\Gamma,t) V_\Gamma(x,t) \right), \\
	j_\Gamma^2(x,t) &= \lim_{\delta\downarrow0} \left((\rho_2 u_2)(x+\delta\nu_\Gamma,t) \cdot \nu_\Gamma(x,t)
	- \rho_2(x+\delta\nu_\Gamma,t) V_\Gamma(x,t) \right)
\end{align*}
for $x\in\Gamma(t)$.
 Here $V_\Gamma(x,t)$ denotes the normal velocity of $\Gamma(t)$ at $x$ in the direction of $\nu_\Gamma(x,t)$.
 We postulate that there is no mass on the interface.
 Then the mass conservation law is of the form
\begin{equation} \label{ESM}
	j_\Gamma^1 = j_\Gamma^2
	\quad\text{on}\quad \Gamma(t)
\end{equation}
which will be denoted simply by $j_\Gamma$.
 In other words, there is no loss or grain of the phase flux across the interface.
 As derived in \cite[1.1]{PS}, the momentum conservation law is of the form
\begin{equation} \label{ESMo}
	\llbracket u \rrbracket j_\Gamma - \llbracket T\nu_\Gamma \rrbracket
	= \sigma H_\Gamma \nu_\Gamma
	\quad\text{on}\quad \Gamma(t).
\end{equation}
Here, we have assumed that coefficient $\sigma$ of the surface tension is a positive constant independent of $x\in\Gamma(t)$ and there is no surface viscous stress.
 The symbol $H_\Gamma$ denotes the mean curvature in the direction of $\nu_\Gamma$ of $\Gamma(t)$.
 As in \cite[1.1]{PS}, we impose constitutive relations
\begin{equation} \label{EC1}
	\llbracket u-(u\cdot\nu_\Gamma)\nu_\Gamma \rrbracket = 0
	\quad\text{on}\quad \Gamma(t).
\end{equation}
This condition says that there is no slip along $\Gamma(t)$.
 If $\llbracket\rho\rrbracket\neq0$ on any point of $\Gamma(t)$, then by using \eqref{EC1}, we can rewrite \eqref{ESM} in a concise form
\begin{equation} \label{EI1}
	\llbracket u \rrbracket = \llbracket 1/\rho \rrbracket j_\Gamma\nu_\Gamma
	\quad\text{on}\quad \Gamma(t).
\end{equation}
Under this condition, \eqref{ESMo} can be expressed as
\begin{equation} \label{EI2}
	\llbracket 1/\rho \rrbracket j_\Gamma^2 - \llbracket T\nu_\Gamma\cdot\nu_\Gamma \rrbracket = \sigma H_\Gamma, \quad
	\llbracket T\nu_\Gamma - (T\nu_\Gamma\cdot\nu_\Gamma)\nu_\Gamma \rrbracket = 0
	\quad\text{on}\quad \Gamma(t).
\end{equation}
(If $\rho_1>\rho_2$ so that $\llbracket 1/\rho \rrbracket>0$, then the first identity forces $\sigma H_\Gamma+\llbracket T\nu_\Gamma\cdot\nu_\Gamma \rrbracket\geq0$.)
 For temperature, we impose
\begin{equation} \label{EI3}
	\llbracket \theta \rrbracket = 0
	\quad\text{on}\quad \Gamma(t)
\end{equation}
which is thermodynamically consistent.
 Under this condition, the second law of the thermodynamics implies the Stefan condition
\begin{equation} \label{EI4}
	\ell j_\Gamma + \llbracket d\partial_\nu \theta \rrbracket = 0
	\quad\text{on}\quad \Gamma(t).
\end{equation}
Here $\ell=-\llbracket\theta\eta\rrbracket$ denotes the latent heat and $\partial_\nu$ is the directional derivative of $\theta$ in the direction of $\nu_\Gamma$.
 The energy conservation law implies the Gibbs-Thomson condition
\begin{equation} \label{EI5}
	\llbracket\psi\rrbracket + \left\llbracket\frac{1}{2\rho^2}\right\rrbracket  j_\Gamma^2 - \llbracket T\nu_\Gamma\cdot\nu_\Gamma/\rho\rrbracket = 0
	\quad\text{on}\quad \Gamma(t).
\end{equation}
Note that \eqref{EI5} is unnecessary if we assume that there occurs no phase transition, i.e., $j_\Gamma\equiv0$ to close the system.
 For detailed derivation of \eqref{EI4} and \eqref{EI5}, see \cite[1.1]{PS} or \cite{PSh}.
 In \cite{PSh}, general thermodynamically consistent constitutive laws are discussed instead of \eqref{EC1} and \eqref{EI3}.

In summary, we consider conditions \eqref{EI1}--\eqref{EI5} on $\Gamma(t)$ under the kinematic condition $V_\Gamma=u\cdot\nu_\Gamma-j_\Gamma/\rho$, where $V_\Gamma$ denotes the normal velocity of $\Gamma(t)$.
We shall rewrite \eqref{EI5} in a form that does not include $j_\Gamma$.

\begin{prop} \label{PGib1}
Assume the first relation of \eqref{EI2}.
 Then \eqref{EI5} is equivalent to
\begin{equation} \label{EI5M}
	\llbracket\psi\rrbracket - \left\llbracket\frac{1}{\rho}\right\rrbracket \left( \frac{T_1^\nu+T_2^\nu}{2} \right) + \frac{1}{\rho_*} \sigma H_\Gamma = 0.
\end{equation}
Here, $T_i^\nu=T_i\nu_\Gamma\cdot\nu_\Gamma$ ($i=1,2$) and $\rho_*$ is the harmonic mean of $\rho_i$ i.e., $1/\rho_*=(1/\rho_1+1/\rho_2)/2$.
\end{prop}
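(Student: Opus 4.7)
The plan is to reduce everything to two elementary identities about jumps across $\Gamma(t)$. For any quantities $a$, $b$ taking limits on both sides of $\Gamma$, write $\{a\}=(a_1+a_2)/2$. Then one checks directly that
\[
  \llbracket ab \rrbracket = \{a\}\llbracket b\rrbracket + \llbracket a\rrbracket\{b\},
  \qquad \llbracket a^2\rrbracket = (a_1+a_2)\llbracket a\rrbracket.
\]
Taking $a=b=1/\rho$ in the second identity and recalling that $1/\rho_{*} = \{1/\rho\}$ by definition gives
\[
  \left\llbracket \frac{1}{2\rho^2}\right\rrbracket = \frac{1}{\rho_{*}} \left\llbracket \frac{1}{\rho}\right\rrbracket.
\]

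Next, applying the product identity with $a=T^\nu$, $b=1/\rho$ yields
\[
  \left\llbracket \frac{T^\nu}{\rho}\right\rrbracket
  = \frac{T_1^\nu+T_2^\nu}{2}\left\llbracket \frac{1}{\rho}\right\rrbracket
  + \frac{1}{\rho_{*}}\llbracket T^\nu\rrbracket,
\]
and by the first relation of \eqref{EI2} we may substitute $\llbracket T^\nu\rrbracket = \llbracket 1/\rho\rrbracket j_\Gamma^2 - \sigma H_\Gamma$, obtaining
\[
  \left\llbracket \frac{T^\nu}{\rho}\right\rrbracket
  = \frac{T_1^\nu+T_2^\nu}{2}\left\llbracket \frac{1}{\rho}\right\rrbracket
  + \frac{1}{\rho_{*}}\left\llbracket \frac{1}{\rho}\right\rrbracket j_\Gamma^2
  - \frac{1}{\rho_{*}}\sigma H_\Gamma.
\]

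Finally, I would plug these two expressions into \eqref{EI5}. The term $\llbracket 1/(2\rho^2)\rrbracket j_\Gamma^2$ exactly cancels the term $(1/\rho_{*})\llbracket 1/\rho\rrbracket j_\Gamma^2$ coming from the expansion of $\llbracket T^\nu/\rho\rrbracket$, and \eqref{EI5} collapses to \eqref{EI5M}. Since every substitution is an equivalence, the reverse implication follows along the same chain read backwards.

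The proof is therefore essentially algebraic; there is no real obstacle. The only thing one has to spot is that the harmonic mean $\rho_{*}$ is characterized by $\{1/\rho\}=1/\rho_{*}$ so that the averages produced by the product jump formula line up with the coefficient of $\sigma H_\Gamma$ appearing in \eqref{EI5M}. This is the one place where the specific definition of $\rho_{*}$ is used and where the cancellation with the $j_\Gamma^2$ contribution becomes transparent.
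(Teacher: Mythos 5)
Your proof is correct and follows essentially the same route as the paper: both arguments rest on the identity $\left\llbracket 1/(2\rho^2)\right\rrbracket = \frac{1}{\rho_*}\left\llbracket 1/\rho\right\rrbracket$, the substitution of the first relation of \eqref{EI2}, and the decomposition of $\left\llbracket T^\nu/\rho\right\rrbracket$ into an average-times-jump form so that the $j_\Gamma^2$ contributions cancel. The only cosmetic difference is that you package the algebra as a general product rule for jumps, $\llbracket ab\rrbracket = \{a\}\llbracket b\rrbracket + \llbracket a\rrbracket\{b\}$, whereas the paper carries out the same computation directly.
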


\begin{proof}
We first note that
\[
	\left\llbracket\frac{1}{2\rho^2}\right\rrbracket = \frac{1}{2\rho_2^2} - \frac{1}{2\rho_1^2}
	= \frac12 \left(\frac{1}{\rho_1}+\frac{1}{\rho_2}\right) \left(\frac{1}{\rho_2}-\frac{1}{\rho_1}\right)
	= \frac{1}{\rho_*} \left\llbracket\frac{1}{\rho}\right\rrbracket.
\]
Plugging the first relation of \eqref{EI2} into \eqref{EI5}, we obtain
\begin{equation} \label{EIM}
	\llbracket\psi\rrbracket + \frac{1}{\rho_*} \left(\llbracket T^\nu\rrbracket + \sigma H_\Gamma \right) - \left\llbracket\frac{T^\nu}{\rho}\right\rrbracket = 0.
\end{equation}
By definition of $\rho_*$, we see that
\[
	\frac{1}{\rho_*} - \frac{1}{\rho_2} 
	= - \left( \frac{1}{\rho_*} - \frac{1}{\rho_1} \right) 
	= \frac12 \left( \frac{1}{\rho_1} -\frac{1}{\rho_2}\right).
\]
Thus,
\begin{align*}
	\frac{1}{\rho_*} \llbracket T^\nu\rrbracket - \left\llbracket\frac{T^\nu}{\rho}\right\rrbracket 
	&= T_2^\nu \left( \frac{1}{\rho_*} - \frac{1}{\rho_2} \right)
	- T_1^\nu \left( \frac{1}{\rho_*} - \frac{1}{\rho_1} \right) \\
	&= \frac{T_1^\nu+T_2^\nu}{2} \left( \frac{1}{\rho_1} - \frac{1}{\rho_2} \right)
	= - \left\llbracket\frac{1}{\rho}\right\rrbracket \frac{T_1^\nu+T_2^\nu}{2}.
\end{align*}
The relation \eqref{EIM} now yields \eqref{EI5M}.
 This also shows the converse implication from \eqref{EI5M} to \eqref{EI5}.  
\end{proof}

If $\Omega$ is a bounded domain and $\Gamma(t)$ does not touch the boundary $\Omega$, the equation \eqref{ENS} with $S=\mu(\nabla u+\nabla u^T)$, $\mu>0$ and \eqref{EH1} under \eqref{EI1}, \eqref{EI2}, \eqref{EI3}, \eqref{EI4}, \eqref{EI5} is known to be well-posed under suitable boundary condition on $\partial\Omega$ and a suitable assumption on $\psi$ \cite{PS}.
 In other words, for a given initial velocity $u_i$, temperature $\theta_i$ ($i=1,2$) and a given interface $\Gamma(0)$, there is a local-in-time solution.
 If $\Gamma(t)$ touches $\partial\Omega$, there is few literature.
 In \cite{Wi}, \cite{Wa}, it is shown that the problem is well-posed (locally-in-time) if the contact angle is 90 degrees and $\Omega$ is a finite straight cylinder with bounded cross-section $D$, i.e., $\Omega=\left\{x'\in D,\ |x_n|<L\right\}$ and $\Gamma(t)$ is given as the graph of a function.
 No phase transition is assumed to occur.

It is sometimes convenient to introduce motion of the mass specific volume $v$ defined by $v=1/\rho$.
 The quantity $1/\rho_*$ is nothing but the average of $v_1$ and $v_2$, where $v_i=1/\rho_i$.
 The quantity $\llbracket1/\rho\rrbracket$ is the difference of the mass specific volume, i.e.,
\[
	\llbracket 1/\rho\rrbracket = \llbracket v\rrbracket = v_2-v_1.
\]

\begin{prop} \label{PGib2}
Assume further that $\sigma H_\Gamma=0$ and $T_i=-p_iI$ at $x\in\Gamma(t)$.
Then \eqref{EI5M} is of the form
\begin{equation} \label{EI5P}
	\llbracket\psi\rrbracket + \llbracket v \rrbracket \frac{p_1+p_2}{2} = 0
	\quad\text{at}\quad x \in \Gamma(t).
\end{equation}
\end{prop}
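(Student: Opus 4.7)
The plan is to invoke Proposition \ref{PGib1} and perform a direct substitution, since by hypothesis the two terms that differ between \eqref{EI5M} and \eqref{EI5P} have explicit simplifications.

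First I would compute the normal stress component $T_i^\nu$ under the hypothesis $T_i = -p_i I$. Since $\nu_\Gamma$ is a unit vector,
\[
T_i^\nu = T_i \nu_\Gamma \cdot \nu_\Gamma = -p_i (\nu_\Gamma \cdot \nu_\Gamma) = -p_i
\]
for $i=1,2$. Hence $(T_1^\nu+T_2^\nu)/2 = -(p_1+p_2)/2$.

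Next I would substitute this together with $\sigma H_\Gamma = 0$ into \eqref{EI5M}. The curvature term drops out, and the middle term acquires a sign change, giving
\[
\llbracket \psi \rrbracket + \left\llbracket \frac{1}{\rho}\right\rrbracket \frac{p_1+p_2}{2} = 0.
\]
Finally, using the identification $\llbracket 1/\rho \rrbracket = \llbracket v \rrbracket$ already noted in the text (where $v = 1/\rho$ is the mass specific volume), this is exactly \eqref{EI5P}.

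There is no real obstacle here; the proof is a one-line computation from Proposition \ref{PGib1} once the constitutive assumption on $T_i$ is used to replace $T_i^\nu$ by $-p_i$. The role of this proposition is mainly to record the form of the Gibbs-Thomson relation that is relevant for the stationary problem considered in later sections, where viscosity and surface tension effects are neglected at the interface, so that only the thermodynamic pressure enters.
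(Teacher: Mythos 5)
Your proof is correct and is exactly the intended argument: the paper states Proposition \ref{PGib2} without a separate proof precisely because it follows from Proposition \ref{PGib1} by the substitution $T_i^\nu=-p_i$, dropping the $\sigma H_\Gamma$ term, and identifying $\llbracket 1/\rho\rrbracket$ with $\llbracket v\rrbracket$. Nothing is missing.
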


In the case $T_i=-p_iI$ on $\Gamma(t)$ and $\sigma H_\Gamma=0$ on $\Gamma(t)$, we have a relation \eqref{EI5P} between pressure $p_1$, $p_2$ and $\theta$ as well as $\rho_1$, $\rho_2$.
 From the first relation of \eqref{EI2}, we have
\begin{equation} \label{EI2S}
	j_\Gamma^2 = \frac{p_1-p_2}{v_2-v_1}.
\end{equation}
If $j_\Gamma^2$ is given, the temperature $\theta$ and one of $\rho_i$ on $\Gamma(t)$ are determined since $p_i$ is determined by the temperature $\theta$ and the other $\rho_i$ under suitable assumptions on $\psi$.
 We shall discuss this thermodynamical discussion in the next section.

We now come back to one-dimensional setting under the assumption (A1), (A2).
 We consider \eqref{EI1}--\eqref{EI5} on the interface $x=x(t)$.
 The mass conservation law \eqref{ESM} is of the form
\begin{equation} \label{EI1S}
	\rho_1 \left(u_1 - \dot{x}(t)\right) = \rho_2 \left(u_2 - \dot{x}(t)\right) = j_\Gamma.
\end{equation}
The momentum conservation law \eqref{EI2} becomes \eqref{EI2S}.
 The Gibbs-Thomson condition becomes \eqref{EI5P} and $T_i=-p_iI$.
 In the bulk, the equation \eqref{EH2} is of the form
\begin{align}
\begin{aligned} \label{EH3}
	\kappa_1\rho_1(\partial_t\theta_1 + u_1\partial_{x_1} \theta_1) - \partial_{x_1} (d_1 \partial_{x_1} \theta_1) = \rho_1 r_1
	\quad\text{in}\quad \Omega_1 = \left(0,x(t)\right), \\
	\kappa_2\rho_2(\partial_t\theta_2 + u_2\partial_{x_1} \theta_2) - \partial_{x_1} (d_2 \partial_{x_1} \theta_2) = \rho_2 r_2
	\quad\text{in}\quad \Omega_2 = \left(x(t),L\right).
\end{aligned}
\end{align}
At the interface $x=x(t)$, we have, by \eqref{EI3}, i.e.,
\begin{equation} \label{EI3S}
	\theta_1 \left(x(t),t\right) = \theta_2 \left(x(t),t\right).
\end{equation}
We further impose the Stefan condition \eqref{EI4} 
\begin{equation} \label{EI4S}
	\ell j_\Gamma + d_2 \partial_{x_1} \theta_2 - d_1 \partial_{x_1} \theta_1 = 0
	\quad\text{at}\quad x = x(t)
\end{equation}
and the Gibbs-Thomson condition \eqref{EI5P}, i.e.,
\begin{equation} \label{EI5S}
	\psi_2(\rho_2,\theta_2) - \psi_1(\rho_1,\theta_1) + (v_2-v_1) \frac{p_1+p_2}{2} = 0
	\quad\text{at}\quad x = x(t),
\end{equation}
where $v_i=1/\rho_i$ ($i=1,2$).
 The phase flux $j_\Gamma$ satisfies the mass conservation law \eqref{EI1S} and its relation with the pressure is given \eqref{EI2S} which is the momentum conservation law.
 This is a two-phase Stefan type free boundary problem.
 At the boundary of $\Omega$, we impose the Dirichlet condition at the left
\begin{equation} \label{EB1}
	\theta(0,t) = \theta_\mathrm{in}
\end{equation}
and the Neumann condition
\begin{equation} \label{EB2}
	\frac{\partial\theta}{\partial x_1} (L,t) = 0
\end{equation}
on the right if $L<\infty$.
 If $L=\infty$, we impose that $\partial\theta/\partial x_1$ is spatially bounded as $x_1\to\infty$.

Let us clarify the problem under (A1) and (A2) when $\psi_i$, $r_i$ and $d_i$ are given for $i=1,2$.
 Both densities $\rho_1$, $\rho_2$ and the velocities $u_1$, $u_2$ are constants.
 We further assume that $\rho_1$ is a given positive constant with $\rho_1\neq\rho_2$ and the entrance velocity $u_1$ is given.
 Our problem becomes \eqref{EI2S}, \eqref{EI1S}, \eqref{EH3}, \eqref{EI3S}, \eqref{EI4S}, \eqref{EI5S} with \eqref{EB1} and \eqref{EB2} supplemented with initial conditions for the temperature $\theta$ and the location of interface, namely  
\begin{align*} 
	& x(0) = x_0 \in (0,L), \quad \theta_1(x_1,0) = \theta_{10}(x_1)\ \ (0<x_1<x_0), \\
	& \theta_2(x_1,0) = \theta_{20}(x_1)\ \ (x_0<x_1<L) \quad\text{with}\quad \theta_{10}(x_0) = \theta_{20}(x_0).
\end{align*}
The unknown are functions $\theta_1$, $\theta_2$ and constants $\rho_2$ and $u_2$ together with the location of the interface.
 Note that $\psi_1$ and $\psi_2$ are given so that $p_1$ and $p_2$ are given as a function of $\rho_i$ and $\theta_i$ ($i=1,2$).
 The pressure $\pi_i$ in the both phases are constant and it is determined by $p_i$ at the interface.
 The interface temperature $\theta_*$ is determined by \eqref{EI4} and \eqref{EI2S}.
 This is one-dimensional version of \eqref{ENS}, \eqref{EPr}, \eqref{EF} with \eqref{EI1}--\eqref{EI5}.
 If the interface temperature $\theta_*$ is given, this is a classical two-phase Stefan problem with drift term.
 The well-posedness of such a type of Stefan problems is well-studied when there is no drift term; see \cite{V}, \cite{T} and references therein.
 However, it is difficult to find literature including our setting.   

We are interested in a stationary problem so that $\theta_i$ and $x=x(t)$ is time independent.
 Then $u_2$ is determined by \eqref{EI1S} as
\begin{equation} \label{EI1SS}
	\rho_1 u_1 = \rho_2 u_2 (=j_\Gamma).
\end{equation}
Let $x=x_*$ be the location of the interface and $\theta_*$ be the temperature at the interface.
 Then, \eqref{EH3} with \eqref{EI1SS}, \eqref{EI2S}, \eqref{EI3S}, \eqref{EI4S}, \eqref{EI5S} becomes
\begin{align*} 
	& \kappa_1 \rho_1 u_1 \partial_{x_1} \theta_1 - \partial_{x_1}(d_1 \partial_{x_1} \theta_1) = \rho_1 r_1
	\quad\text{in}\quad (0,x_*), \\
	& \kappa_2 \rho_2 u_2 \partial_{x_1} \theta_2 - \partial_{x_1}(d_2 \partial_{x_1} \theta_2) = \rho_2 r_2
	\quad\text{in}\quad (x_*,L), \\
	& u_2 = \rho_1 u_1/\rho_2, \\
	& (\rho_1 u_1)^2 = \left(p_1(\rho_1,\theta_*) - p_2(\rho_2,\theta_*)\right) / (v_2-v_1), \\
	& \theta_1 = \theta_2 = \theta_* \quad\text{at}\quad x = x_*, \\
	& \ell \rho_1 u_1 + d_2 \partial_{x_1} \theta_2 - d_1 \partial_{x_1} \theta_1 = 0
	\quad\text{at}\quad x=x_*, \\
	& \psi_2 (\rho_2, \theta_\ast) - \psi_1(\rho_1,\theta_*) + (v_1-v_2) \frac{p_1(\rho_1,\theta_*)+p_2(\rho_2,\theta_*)}{2} = 0
	\quad\text{at}\quad x=x_*.
\end{align*}
We impose boundary conditions \eqref{EB1}, \eqref{EB2} to this system.
 If $L=\infty$, then we impose that $\partial_{x_1}\theta_2$ is bounded as $x_1\to\infty$.
 For a given $\rho_1$, $u_1$, $r_i$ ($i=1,2$) and $\theta_1(0)=\theta_\mathrm{in}$, our problem is to find constants $x_*$, $\rho_2$, $u_2$ and functions $\theta_1$, $\theta_2$ solving the above system.
 The problem is now the stationary Stefan problem but one should note that the interface temperature $\theta_*$ is also unknown.
 Fortunately, this problem is decoupled and can be discussed separately as in the next section.
\begin{definition} \label{DDR}
The point $x_*$ is called a \emph{dryout point}.
\end{definition}

\section{Interface temperature} \label{S3} 

If $j_\Gamma$ is given and small, then the Gibbs-Thomson law \eqref{EI5P} (or \eqref{EI5S}) and the momentum balance \eqref{EI2S} determines the temperature $\theta_*$ at the interface under a suitable condition on the Helmholtz energy.
In this section, we give a few sufficient conditions so that $\theta_*$ is uniquely determined by $\psi_i$'s and one of $\rho_i$'s.

We recall conventional assumptions on the mass specific Helmholtz energy $\psi$.
It turns out that it is convenient to write $\psi$ in the mass specific volume $v=1/\rho$.
We set
\[
	\tilde{\psi}(v,\theta) := \psi(1/v,\theta).
\]
The merit of this expression is that the pressure $\tilde{p}(v,\theta):=p(1/v,\theta)$ is of the form
\[
	\tilde{p}(v,\theta) = -\partial_v \tilde{\psi}(v,\theta)
\]
since
\[
	\partial_v \tilde{\psi}(v,\theta) = \partial_\rho \psi \cdot \partial_v (1/v)
	= -(1/v^2) \partial_\rho\psi = -\rho^2 \partial_\rho\psi = -p(1/v,\theta).
\]
Here $\tilde{f}$ denotes the function of $f$ in the variable $v$.
 Let $\eta$ be the mass specific entropy.
 It is defined as $\eta=-\partial_\theta\psi$ so that
\[
	\tilde{\eta}(v,\theta) = -\partial_\theta \tilde{\psi}(v,\theta).
\]
In other words, $(\tilde{p},\tilde{\eta})$ is simply a gradient of $-\tilde{\psi}$.
 We list a few basic assumptions which are physically reasonable.
\begin{enumerate}
\item[(R)] (Regularity) $\tilde{\psi}\in C^2\left((\alpha,\infty)\times(0,\infty)\right)$ with some $\alpha>0$;
\item[(M1)] (Monotonicity of the pressure) $\partial_v\tilde{p}<0$ and $\partial_\theta\tilde{p}=\partial_v\tilde{\eta}>0$ in $(\alpha,\infty)\times(0,\infty)$.
 In particular, the pressure $p=p(\rho,\theta)$ is strictly monotone increasing in both variables.
\item[(M2)] (Monotonicity of the entropy in the temperature) $\partial_\theta\tilde{\eta}>0$ in $(\alpha,\infty)\times(0,\infty)$.
\end{enumerate}
The condition (M2) is equivalent to saying that the specific heat at constant volume
\[
	\kappa = \partial_\theta \epsilon
\]
is positive everywhere, where $\epsilon=\psi+\eta\theta$ denotes the mass specific internal energy.
 Indeed,
\[
	\kappa = \partial_\theta \epsilon
	= \partial_\theta \psi + \eta + \theta \partial_\theta \eta
	= \theta \partial_\theta \eta, \quad
	\big( \partial_\theta \eta \big) (\rho, \theta) = \big( \partial_\theta \widetilde{\eta} \big) (v, \theta).
\]
The assumption $\partial_v\tilde{p}<0$ in (M1) implies that $\tilde{\psi}$ is convex in $v$ while (M2) implies that $\tilde{\psi}$ is concave in $\theta$.

Let $\psi_\ell$ denote the Helmholtz energy of the liquid phase while $\psi_g$ denote the Helmholtz energy of the vapor (gas) phase.
 The domain of definition is $(\alpha_\ell,\infty)\times(0,\infty)$ and $(\alpha_g,\infty)\times(0,\infty)$, respectively.
 We assume
\[
	\alpha_\ell \geq \alpha_g
\]
and (R), (M1), (M2) for each $\tilde{\psi}=\tilde{\psi}_\ell$, $\tilde{\psi}_g$.

Here is a structural assumption to describe a phase transition.
\begin{enumerate}
\item[(P)] Let $\theta_c>0$ be the critical temperature so that $\tilde{\psi}_u=\min(\tilde{\psi}_\ell,\tilde{\psi}_g)=\tilde{\psi}_\ell\wedge\tilde{\psi}_g$ is convex in $v\geq\alpha_\ell(\geq\alpha_g)$ for $\theta\geq\theta_c$ and for $\theta<\theta_c$, $\tilde{\psi}_u=\tilde{\psi}_\ell$ for $v<v_\theta$ and $\tilde{\psi}_u=\tilde{\psi}_g$ for $v\geq v_\theta$ with some $v_\theta$ depending on $\theta$.
 We further assume that $v_\theta$ is strictly decreasing in $\theta$ so that $\tilde{p}_u=-\partial_v\tilde{\psi}_u$ is increasing in $\theta$; see Figure \ref{Fgu}, \ref{Fpr}.
\end{enumerate}
If $\tilde{\psi}_\ell$ and $\tilde{\psi}_g$ satisfies (R), (M1), (M2) with $\alpha_\ell\geq\alpha_g$, under (P) our unified Helmholtz energy $\tilde{\psi}_u$ satisfies (M2) and $\partial_\theta\tilde{p}_u>0$ with $C^2$ regularity outside $v=v_\theta$ for $\theta<\theta_c$.
\begin{figure}[htb]
  \begin{minipage}[b]{0.47\linewidth}
\centering 
\includegraphics[keepaspectratio, scale=0.25]{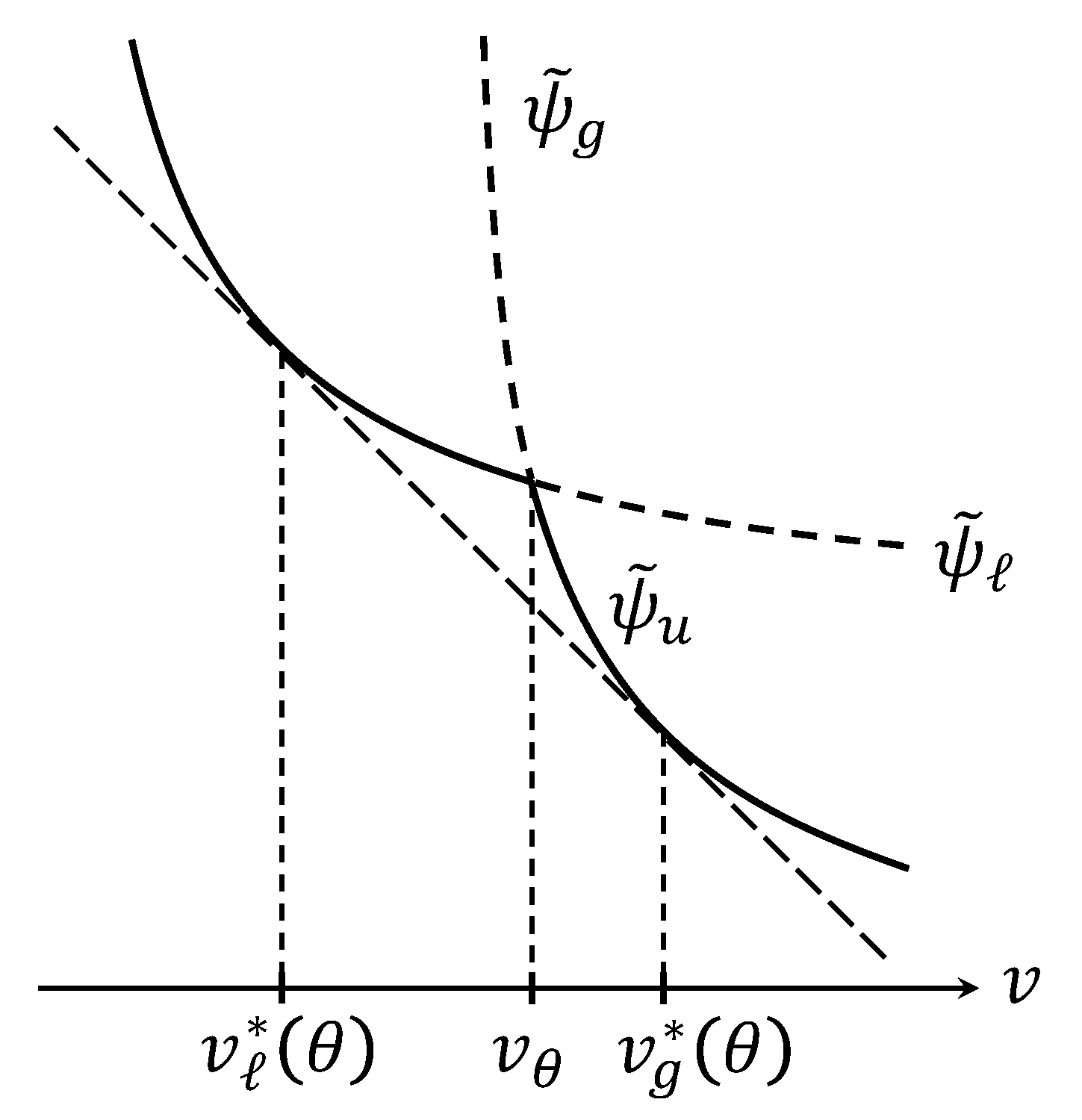}
\caption{the graph of $\tilde{\psi}_u(\cdot,\theta)$ for $\theta<\theta_c$ and bitangent line\label{Fgu}}
	\end{minipage}
	  \begin{minipage}[b]{0.03\linewidth}
	  \centering 
\includegraphics[keepaspectratio, scale=0.03]{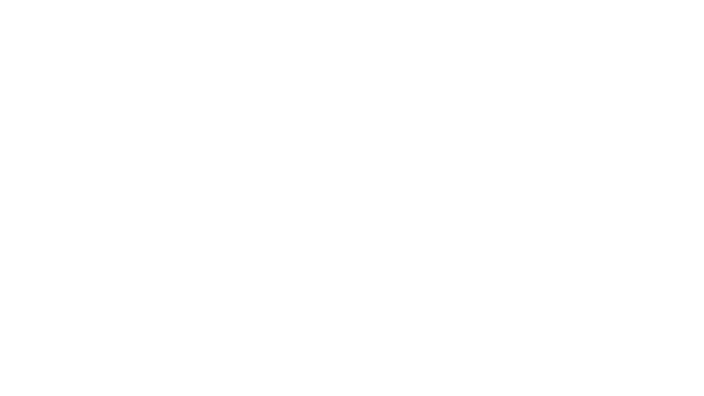}
	  \end{minipage}
  \begin{minipage}[b]{0.47\linewidth}
\centering 
\includegraphics[keepaspectratio, scale=0.25]{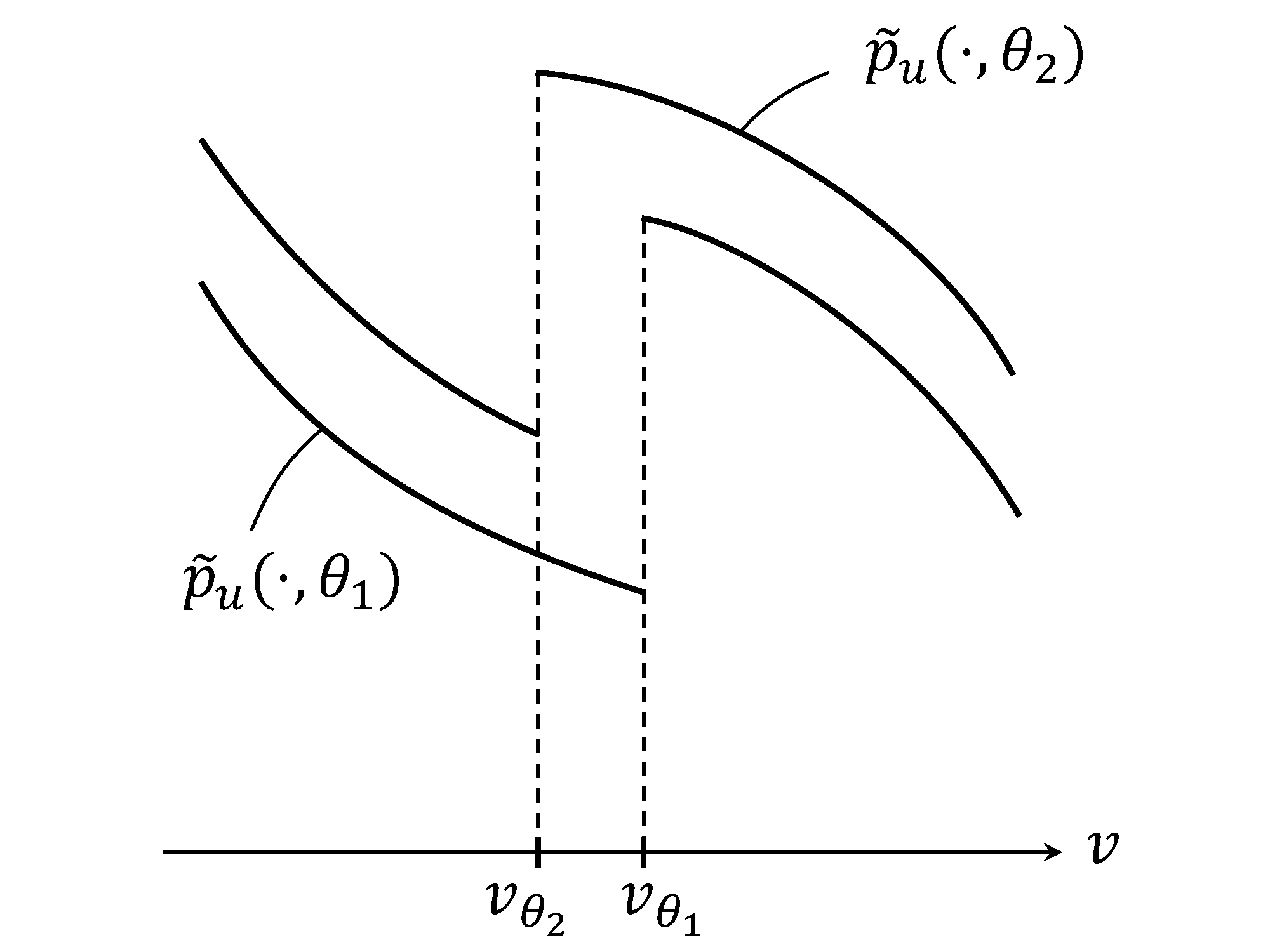}
\caption{the graph of the pressure $\tilde{p}_u=-\partial_v\tilde{\psi}_u$ under $\theta_1<\theta_2$\label{Fpr}}
  \end{minipage}
\end{figure}

We consider the convexification of $\tilde{\psi}_u(\cdot,\theta)$ in $v$ for $\theta<\theta_c$.
 There is a bitangent line in some interval $\left[v_\ell^*(\theta), v_g^*(\theta)\right]$
 to $\tilde{\psi}_u$ whose slope is $-\tilde{p}\left(v_\ell^*(\theta),\theta\right) = -\tilde{p}\left(v_g^*(\theta),\theta\right)$ and outside this interval the convexification of $\tilde{\psi}_u$ agrees with $\tilde{\psi}_u$; see Figure \ref{Fgu}.
 For a given pressure $p_*$, we call $\rho_g^*=1/v_g^*$ a \emph{saturated gas density} if $p_*=\tilde{p}\left(v_g^*(\theta),\theta\right)$ at the temperature $\theta<\theta_c$.
 Similarly, if $p_*=\tilde{p}\left(v_\ell^*(\theta),\theta\right)$, we call $\rho_\ell^*=1/v_\ell^*$ a \emph{saturated liquid density} at the temperature $\theta<\theta_c$.
 Such a $\theta$ is often called a \emph{boiling temperature}.
 For a given temperature $\theta$, the
pressure $p_*$ is called a \emph{saturated vapor pressure}, i.e., the pressure satisfying $p_*=\tilde{p}\left(v_g^*(\theta),\theta\right)=\tilde{p}\left(v_\ell^*(\theta),\theta\right)$.
 Here is a set of standard assumptions.
\begin{enumerate}
\item[(S1)] The function $p_*(\theta)$ is strictly increasing in $\theta<\theta_c$.
\item[(S2)] Moreover, $v_g^*(\theta)$ is strictly decreasing and $v_\ell^*(\theta)$ is strictly increasing in $\theta$.
\end{enumerate}
These assumptions are consistent with (P).
 The monotonicity (S1) of $p_*$ in $\theta$ is often derived from the Clasius-Clapeyron relation.
 Here is our interpretation.
 At the saturated pressure the energy balance \eqref{EI5P} holds with $p_1=p_2=p_*$.
 More precisely, if $\Omega_1$ is liquid phase and $\Omega_2$ is a gas phase, \eqref{EI5P} or \eqref{EI5S} reads
\[
	\tilde{\psi}_g \left(v_g^*(\theta),\theta\right)
	- \tilde{\psi}_\ell \left(v_\ell^*(\theta),\theta\right)
	+ \left(v_g^*(\theta) - v_\ell^*(\theta) \right) p_* = 0,
\]
where the continuity of the temperature \eqref{EI3S} is assumed. 
 Differentiating in $\theta$, we see that
\begin{gather*}
	- \tilde{p} \left(v_g^*(\theta),\theta\right) \frac{dv_g^*}{d\theta}
	+ \tilde{p} \left(v_\ell^*(\theta),\theta \right) \frac{dv_\ell^*}{d\theta}
	- \tilde{\eta} \left(v_g^*(\theta),\theta\right)
	+ \tilde{\eta} \left(v_\ell^*(\theta),\theta\right) \\
	+ \frac{dv_g^*}{d\theta} p_* - \frac{dv_\ell^*}{d\theta} p_*
	+ \frac{dp_*}{d\theta} (v_g^* - v_\ell^*) = 0.
\end{gather*}
Since $\tilde{p}\left(v_g^*(\theta),\theta\right)=\tilde{p}\left(v_\ell^*(\theta),\theta\right)=p_*$, we get
\[
	-\tilde{\eta} \left(v_g^*(\theta),\theta\right)
	+ \tilde{\eta} \left(v_\ell^*(\theta),\theta\right)
	+ \frac{dp_*}{d\theta} (v_g^* - v_\ell^*) = 0.
\]
In other words,
\begin{equation} \label{ECC}
	\frac{dp_*}{d\theta} = \frac{\eta_g^*-\eta_\ell^*}{v_g^* - v_\ell^*}
	= \frac{(-\ell)}{\theta(v_g^* - v_\ell^*)}, \quad
	\eta_g^* = \tilde{\eta}\left(v_g^*(\theta),\theta\right), \quad
	\eta_\ell^* = \tilde{\eta}\left(v_\ell^*(\theta),\theta\right),
\end{equation}
where $\ell=-\theta(\eta_g^*-\eta_\ell^*)$ denotes the latent heat in \eqref{EI4}.
 Here we invoke \eqref{EI3S}.
 The relation \eqref{ECC} is nothing but the
Clasius-Clapeyron relation.
We conclude from \eqref{ECC} 
\begin{prop} \label{PCC}
If $-\ell>0$ and $v_g^*>v_\ell^*$, then $dp_*/d\theta>0$.
In particular, (S1) follows.
\end{prop}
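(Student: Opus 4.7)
The statement is a direct corollary of the Clausius--Clapeyron relation \eqref{ECC} derived immediately above, so the plan is essentially a sign analysis rather than a new argument. I would begin by rewriting \eqref{ECC} in the form
\[
\frac{dp_*}{d\theta} = \frac{-\ell}{\theta(v_g^* - v_\ell^*)},
\]
and then inspect the three factors. The numerator $-\ell$ is positive by the first hypothesis, the factor $v_g^* - v_\ell^*$ is positive by the second hypothesis, and the temperature $\theta$ is positive because we work in the regime $\theta \in (0,\theta_c)$ in which the bitangent construction and hence $v_g^*(\theta)$ and $v_\ell^*(\theta)$ are defined. Multiplying signs gives $dp_*/d\theta > 0$ pointwise on $(0,\theta_c)$.

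To deduce the (S1) assertion, I would observe that $p_*$ is continuously differentiable on $(0,\theta_c)$; this regularity was already used implicitly when differentiating the Gibbs--Thomson identity in $\theta$ to obtain \eqref{ECC}, and it is inherited from (R) for $\tilde\psi_\ell$ and $\tilde\psi_g$ together with the smooth dependence of $v_g^*(\theta)$ and $v_\ell^*(\theta)$ in the convexification construction. A $C^1$ function on an interval with everywhere positive derivative is strictly increasing, so $p_*$ is strictly increasing on $(0,\theta_c)$, which is exactly (S1).

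The main --- and really only --- obstacle has therefore already been dealt with prior to the statement, namely the legitimacy of differentiating the Gibbs--Thomson relation in $\theta$ to produce \eqref{ECC} and the identification of its right-hand side in terms of the latent heat via $\ell = -\theta(\eta_g^* - \eta_\ell^*)$. Once \eqref{ECC} is in hand, the proposition itself is a one-line sign count, and I would not expect to need any additional ingredient.
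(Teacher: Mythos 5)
Your proposal is correct and follows essentially the same route as the paper: the paper derives the Clausius--Clapeyron relation \eqref{ECC} by differentiating the Gibbs--Thomson identity at the saturated state and then concludes the proposition directly from the sign of each factor, exactly as you do. Your additional remarks on the regularity of $p_*$ and the passage from a positive derivative to strict monotonicity are sound and only make explicit what the paper leaves implicit.
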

We are interested to determine the temperature of the interface $\Gamma$ assuming that $\Gamma$ is flat.
The interface $\Gamma$ is assumed to bound a liquid phase $\Omega_1$ with density $\rho_\ell$ and a gas phase $\Omega_2$ with density $\rho_g$.
Here $\rho_\ell$ is given positive constant.
We shall determine interface temperature $\theta$ and $\rho_g(>\rho_\ell)$ at least when $j_\Gamma$ is small.
We write the corresponding mass specific volume by $v_g=1/\rho_g$, $v_\ell=1/\rho_\ell$.
By the momentum conservation (\ref{EI2S}) across $\Gamma$, we have
\[
	p_\ell - p_g = j_\Gamma^2(v_g-v_\ell),
\]
where $p_\ell$ is the pressure of the liquid on $\Gamma$ while $p_g$ denote the pressure of the gas phase on $\Gamma$.
 We set $Z=j_\Gamma^2/2$.
 We write \eqref{EI5P} or \eqref{EI5S} of the form
\[
	\frac{p_\ell + p_g}{2} = \frac{1}{v_g - v_\ell}
	\left(-\tilde{\psi}_g(v_g,\theta) + \tilde{\psi}_\ell(v_\ell,\theta)\right).
\]
Using $p_\ell-p_g=2Z(v_g-v_\ell)$, we observe that
\begin{gather}
	\tilde{p}_\ell(v_\ell,\theta) = Z(v_g-v_\ell) + \frac{1}{v_g-v_\ell}
	\left(-\tilde{\psi}_g(v_g,\theta) + \tilde{\psi}_\ell(v_\ell,\theta)\right), \label{ESt} \\
		\tilde{p}_g(v_g,\theta) = -Z(v_g-v_\ell) + \frac{1}{v_g-v_\ell}
	\left(-\tilde{\psi}_g(v_g,\theta) + \tilde{\psi}_\ell(v_\ell,\theta)\right). \label{ESt2}
\end{gather}
We fix $v_\ell$ and write a system of equation for $v=v_g$ and $\theta$.
We set
\begin{gather*}
	f_1(\theta,v,Z) := \tilde{p}_\ell(v_\ell,\theta) - Z(v-v_\ell) - \frac{1}{v-v_\ell} \left(\tilde{\psi}_\ell(v_\ell,\theta) - \tilde{\psi}_g(v,\theta)\right), \\
	f_2(\theta,v,Z) := \tilde{p}_g(v,\theta) + Z(v-v_\ell) - \frac{1}{v-v_\ell} \left(\tilde{\psi}_\ell(v_\ell,\theta) - \tilde{\psi}_g(v,\theta)\right).
\end{gather*}
Then the equation \eqref{ESt}, \eqref{ESt2} for $(v,\theta)$ is of the form
\begin{equation} \label{ESE}
	f_1(v,\theta,Z) = 0, \quad f_2(v,\theta,Z) = 0.
\end{equation}
We assume (R), (M1), (M2) for $\psi_g$ and $\psi_\ell$.
We further assume (S1) and (S2).
For $v_\ell=1/\rho_\ell$, let $\theta_b$ be its boiling temperature, i.e., $v_\ell=v_\ell^*(\theta_b)$.
Since at the boiling temperature
\[
	\tilde{p}_\ell(v_\ell,\theta_b) = \frac{1}{v_g^*-v_\ell} \left(\tilde{\psi}_\ell(v_\ell,\theta_b) - \tilde{\psi}_g(v_g^*,\theta_b)\right)
	= \tilde{p}_g(v_g^*,\theta_b)
\]
with $v_g^*=v_g^*(\theta_b)$, we see that
\[
	f_1(\theta_b,v_g^*,0) = 0, \quad f_2(\theta_b,v_g^*,0) = 0.
\]
We shall solve \eqref{ESE} near $Z=0$ by the implicit function theorem.
We calculate
\begin{align*} 
	&\partial_\theta f_1 = \partial_\theta \tilde{p}_\ell - \frac{1}{v-v_\ell} (\partial_\theta\tilde{\psi}_\ell - \partial_\theta\tilde{\psi}_g)
	= \partial_v\tilde{\eta}_\ell + \frac{1}{v-v_\ell} (\tilde{\eta}_\ell - \tilde{\eta}_g) \\
	&\partial_\theta f_2 = \partial_v \tilde{\eta}_g + \frac{1}{v-v_\ell} (\tilde{\eta}_\ell - \tilde{\eta}_g)
\end{align*}
since $\partial_\theta\tilde{p}_g=\partial_v\tilde{\eta}_g$, $\partial_\theta\tilde{p}_\ell=\partial_v\tilde{\eta}_\ell$, $\partial_\theta\tilde{\psi}_\ell=-\tilde{\eta}_\ell$, $\partial_\theta\tilde{\psi}_g=-\tilde{\eta}_g$.
 For derivative in $v$, we observe that
\begin{align*} 
	&\partial_v f_1 = 0 - Z + R(v,\theta) \\
	&\partial_v f_2 = \partial_v \tilde{p}_g + Z + R(v,\theta)
\end{align*}
with
\begin{align*}
	R(\theta,v) &= \frac{1}{(v-v_\ell)^2} (\tilde{\psi}_\ell-\tilde{\psi}_g)
	-  \frac{1}{v-v_\ell} (-\partial_v \tilde{\psi}_g) \\
	&= \frac{1}{(v-v_\ell)} \left\{\frac{\tilde{\psi}_\ell - \tilde{\psi}_g}{v-v_\ell} - \tilde{p}_g(v,\theta) \right\}.
\end{align*}
By definition of $\theta_b$ and $v_g^*$, we see that $R(\theta_b,v_g^*)=0$.
 Thus, the Jacobi matrix $J$ in $\theta$ and $v$ at $\theta_b$, $v_g^*$ equals
\[
J = \left(
\begin{array}{cc}
	\partial_\theta f_1 & \partial_v f_1 \\
	\partial_\theta f_2 & \partial_v f_2 \\
\end{array}
\right)
(\theta_b,v_g^*,0) = \left(
\begin{array}{cc}
	\partial_v\tilde{\eta}_\ell + \frac{1}{v_g^*-v_\ell}(\tilde{\eta}_\ell-\tilde{\eta}_g) & 0 \\
	\partial_v\tilde{\eta}_g + \frac{1}{v_g^*-v_\ell}(\tilde{\eta}_\ell-\tilde{\eta}_g) & \partial_v\tilde{p}_g
\end{array}
\right)
(\theta_b,v_g^*,0).
\]
We now apply the implicit function theorem to get
\begin{thm} \label{TIF}
Assume that the liquid density $\rho_\ell$ is given and $\theta_b$ is its boiling temperature, i.e., $1/\rho_\ell=v_\ell^*(\theta_b)$.
 (This means that $\rho_\ell$ is the saturated liquid density at the temperature $\theta_b$.)
 Under the regularity assumption (R) assume that
\begin{align} 
	&\partial_v \tilde{\eta}_\ell(v_\ell^*,\theta_b) > \frac{\tilde{\eta}_g^*- \tilde{\eta}_\ell^*}{v_g^*- v_\ell^*}, \label{EENR} \\
	&\partial_v \tilde{p}_g(v_g^*,\theta_b) < 0 \label{EPRR} 
\end{align}
with $\tilde{\eta}_g^*=\tilde{\eta}_g(v_g^*,\theta_b)$, $\tilde{\eta}_\ell^*=\tilde{\eta}_\ell(v_\ell^*,\theta_b)$.
 Then, for sufficiently small $Z$, there is unique $(v,\theta)$ near $(v_g^*,\theta_b)$ such that \eqref{ESE} holds.
 Moreover, the mapping $Z\mapsto\theta$ is strictly increasing by \eqref{EENR}.
\end{thm}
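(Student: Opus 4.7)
The plan is to verify the hypotheses of the implicit function theorem at the reference point $(\theta,v,Z)=(\theta_b,v_g^\ast,0)$ and then read off monotonicity from the linearized system. The authors have already written the system in the form
\[
F(\theta,v,Z)=\bigl(f_1(\theta,v,Z),\,f_2(\theta,v,Z)\bigr)=(0,0),
\]
and have computed the Jacobian $J=\partial_{(\theta,v)}F$ at $(\theta_b,v_g^\ast,0)$ to be lower triangular with diagonal entries
\[
J_{11}=\partial_v\tilde\eta_\ell(v_\ell^\ast,\theta_b)+\frac{\tilde\eta_\ell^\ast-\tilde\eta_g^\ast}{v_g^\ast-v_\ell^\ast},\qquad J_{22}=\partial_v\tilde p_g(v_g^\ast,\theta_b).
\]
Thus $\det J=J_{11}J_{22}$, and the assumptions translate directly: \eqref{EENR} rearranged says exactly $J_{11}>0$, while \eqref{EPRR} says $J_{22}<0$. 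In particular $\det J<0\ne0$, so the implicit function theorem (valid in view of the $C^2$ regularity of $\tilde\psi_\ell,\tilde\psi_g$ from (R), which makes $F$ itself $C^1$ near the reference point) yields a neighborhood of $Z=0$ and a unique $C^1$ branch $Z\mapsto(\theta(Z),v(Z))$ with $(\theta(0),v(0))=(\theta_b,v_g^\ast)$ solving \eqref{ESE}.

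For monotonicity of $Z\mapsto\theta(Z)$, I differentiate the identity $F(\theta(Z),v(Z),Z)=0$ at $Z=0$. Since $\partial_Z f_1=-(v-v_\ell)$ and $\partial_Z f_2=v-v_\ell$, the linearization reads
\[
J\begin{pmatrix}\theta'(0)\\ v'(0)\end{pmatrix}=\begin{pmatrix}v_g^\ast-v_\ell\\ -(v_g^\ast-v_\ell)\end{pmatrix}.
\]
Because $J$ is lower triangular, the first equation decouples and gives
\[
\theta'(0)=\frac{v_g^\ast-v_\ell}{J_{11}}>0,
\]
as $v_g^\ast>v_\ell$ (vapor specific volume exceeds liquid specific volume, which is guaranteed by (P) and (S2)) and $J_{11}>0$ by \eqref{EENR}. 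By continuity of $\theta'(Z)$, this derivative stays positive on a possibly smaller neighborhood of $0$, so $Z\mapsto\theta(Z)$ is strictly increasing there.

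There is really no serious obstacle in this argument; the main point is the algebraic rewriting showing that \eqref{EENR} is precisely the positivity of $J_{11}$. The only places where one must be a little careful are: (i) checking that $F$ and its derivatives are well defined and continuous at $(\theta_b,v_g^\ast,0)$ despite the apparent singularity $1/(v-v_\ell)$ (it is harmless since $v_g^\ast>v_\ell$); (ii) noting that \eqref{EPRR} is compatible with the convexification picture in (P) only away from the spinodal region, which is why the claim is restricted to a neighborhood of the saturated point $v_g^\ast$; and (iii) observing that the uniqueness provided by the implicit function theorem is local, which matches the statement of the theorem.
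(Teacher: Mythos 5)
Your proposal is correct and follows essentially the same route as the paper: the paper's proof consists precisely of computing the lower-triangular Jacobian $J$ at $(\theta_b,v_g^*,0)$, identifying \eqref{EENR} with $J_{11}>0$ and \eqref{EPRR} with $J_{22}<0$, and invoking the implicit function theorem. Your explicit computation of $\theta'(0)=(v_g^*-v_\ell)/J_{11}>0$ correctly supplies the monotonicity step that the paper leaves implicit.
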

Our assumption on the pressure \eqref{EPRR} is consistent with previous assumptions.
 The property $\partial_v\tilde{p}_g(v_1^*,\theta_v)<0$ follows from (M1).
 By \eqref{ECC}, the inequality for the entropy \eqref{EENR} can be rewritten in the form of the saturated vapor pressure $p_*$.
 It is of the form
\begin{equation} \label{EENR1}
	\partial_\theta \tilde{p}_\ell (v_\ell^*,\theta_b)
	> \partial_\theta p_*(\theta_b)
\end{equation}
since $\partial_\theta\tilde{p}_\ell=\partial_v \tilde{\eta}_\ell$.
 This condition is consistent with (S2).
 In the language of $\psi_u$, this condition can be interpreted as $\partial_v\tilde{\eta}$ has a negative jump at $v=v_\ell^*$.
 The entropy $\tilde{\eta}$ is often assumed to be concave in $v$ as well as the monotonicity in $v$ as in (M1).
 Figure \ref{FCEN} and Figure \ref{Fent2} describe \eqref{EENR} schematically.
\begin{figure}[htb]
  \begin{minipage}[b]{0.47\linewidth}
\centering 
\includegraphics[keepaspectratio, scale=0.25]{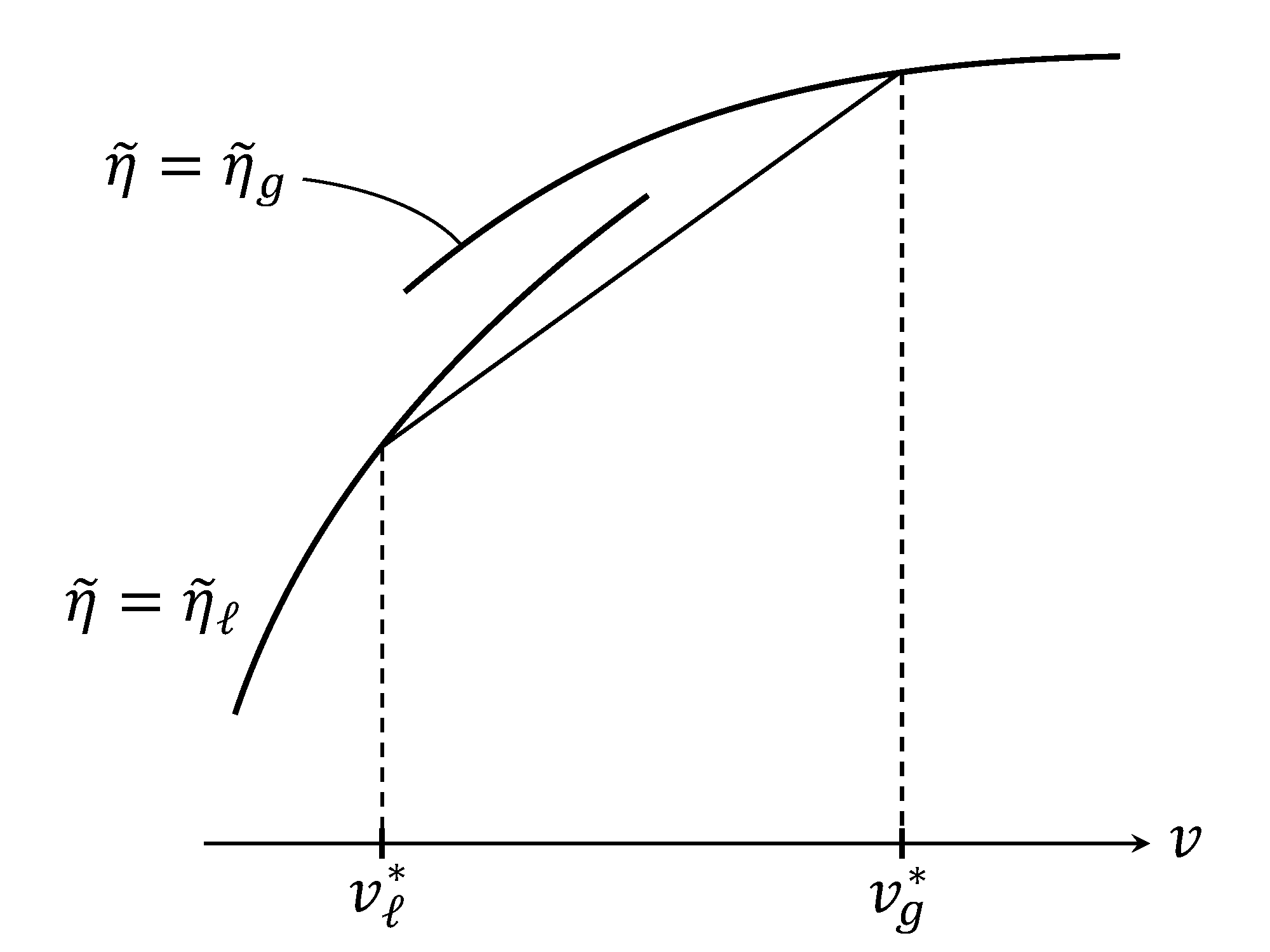} 
\caption{condition \eqref{EENR} under (M1)\label{FCEN}}
	\end{minipage}
  \begin{minipage}[b]{0.47\linewidth}
\centering 
\includegraphics[keepaspectratio, scale=0.25]{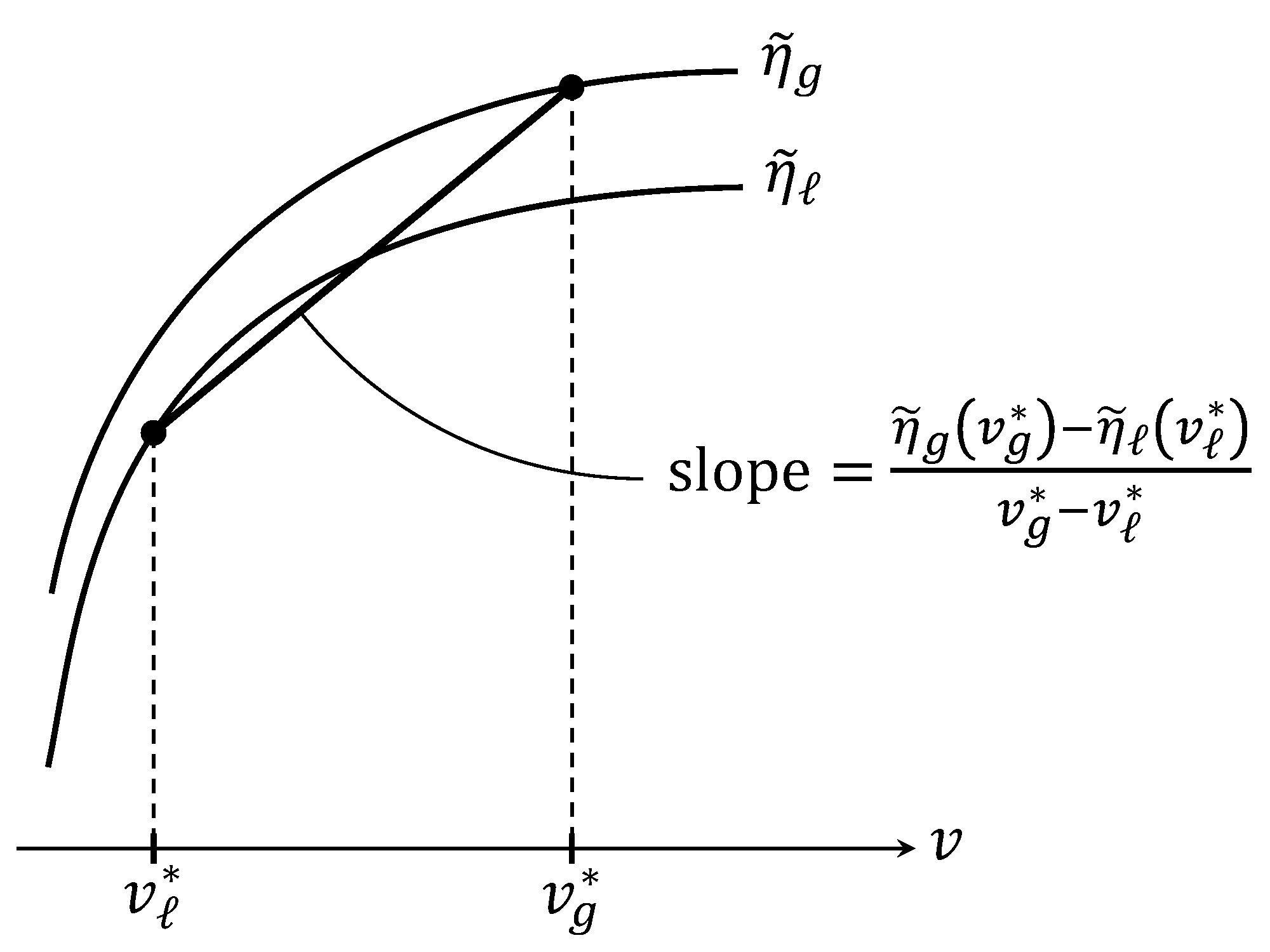}
\caption{graphs of $\tilde{\eta}_g$, $\tilde{\eta}_\ell$ at the temperature $\theta$\label{Fent2}}
  \end{minipage}
\end{figure}

It is instructive to check assumptions \eqref{EENR} for the van der Waals' Helmholtz energy of the form
\[
 \psi = k_1\theta(1-\log\theta) + k_2\theta \log \frac{\rho}{1-b\rho} -a\rho
\]
with positive constants $k_1$, $k_2$, $a$, $b$.
 Note that in the case $a=b=0$, this is the Helmholtz energy for ideal gas.
Then,
\begin{align*} 
	&\tilde{\psi}(v, \theta) = k_1\theta(1-\log\theta) - k_2\theta\log(v-b)-a/v, \\
	&\tilde{\eta}(v, \theta) = -\partial_\theta \tilde{\psi} = k_1 \log\theta + k_2 \log(v-b), \\
	&\tilde{p}(v, \theta) = -\partial_v\tilde{\psi} = k_2\theta\frac{1}{v-b} - \frac{a}{v^2}, \quad v>b.
\end{align*}
Notice that there is a non monotone part for $\tilde{p}$ with respect to $v$.
 Actually, $\tilde{\psi}$ has non convex part in $v$ for $\theta<\theta_c$ with some $\theta_c$ though it is convex in $v$ for $\theta>\theta_c$; see Figure \ref{Fvan1} and Figure \ref{Fvan2}.
\begin{figure}[htb]
  \begin{minipage}[b]{0.47\linewidth}
\centering 
\includegraphics[keepaspectratio, scale=0.25]{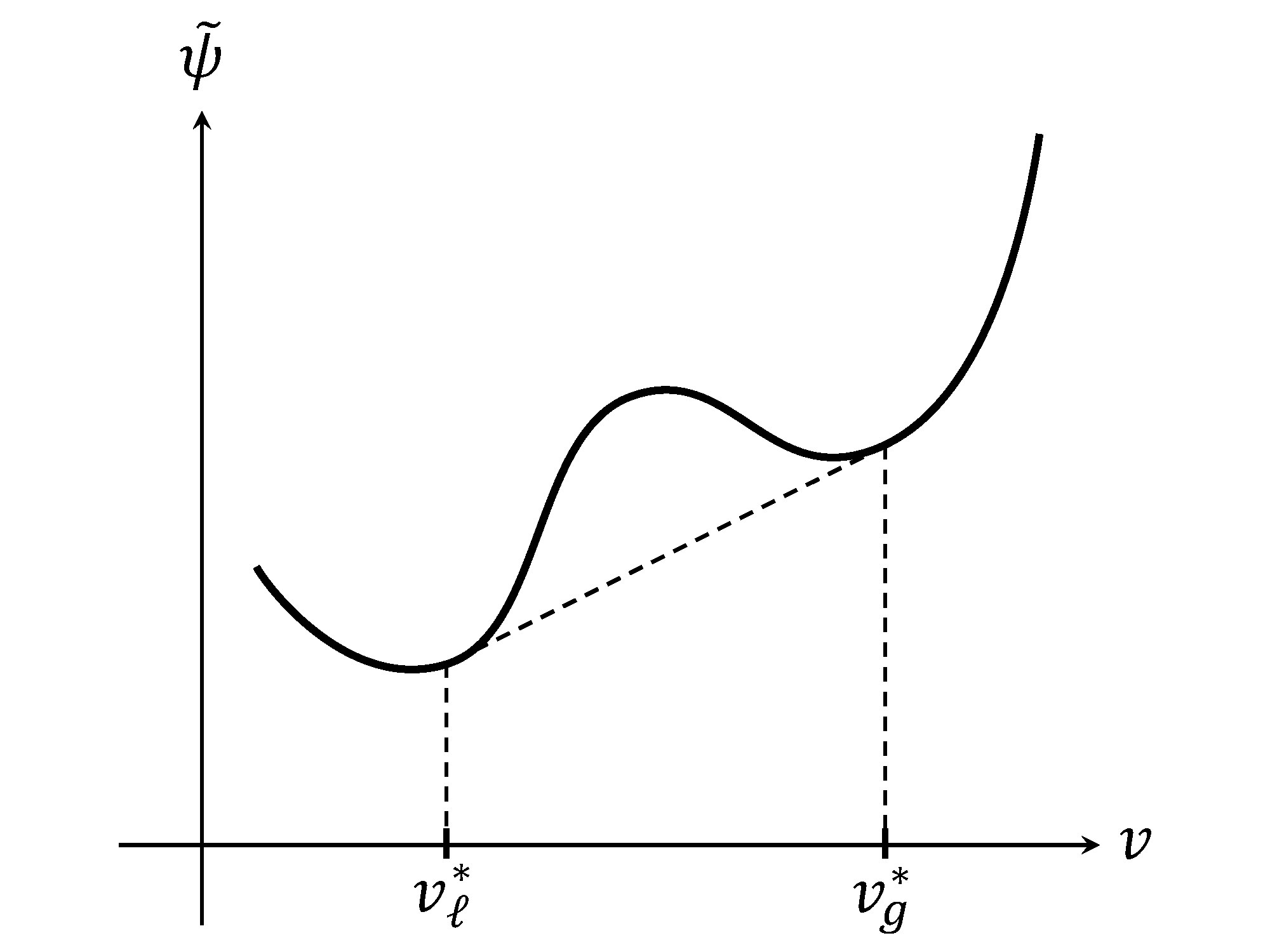} 
\caption{very low temperature\label{Fvan1}}
	\end{minipage}
  \begin{minipage}[b]{0.47\linewidth}
\centering 
\includegraphics[keepaspectratio, scale=0.25]{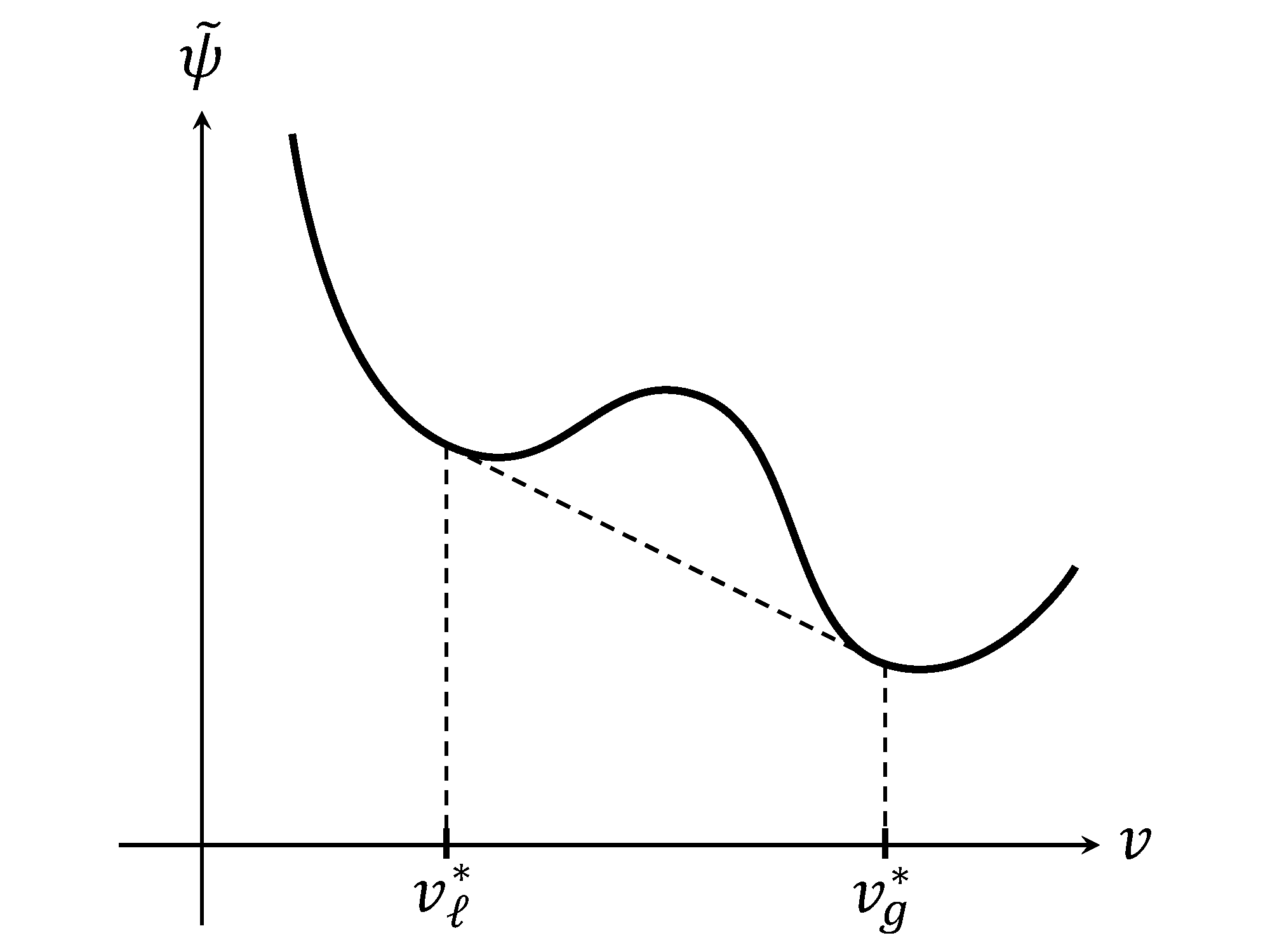}
\caption{high temperature\label{Fvan2}}
  \end{minipage}
\end{figure}
We take saturated densities $\rho_\ell^*$, $\rho_g^*$ at a given temperature as in Figure \ref{Fvan1}, \ref{Fvan2}.
The condition \eqref{EPRR} is clearly satisfied at $v_g^*$ and $v_\ell^*$.
Since $\partial_v^2 \tilde{\eta}=-k_2 / (v-b)^2<0$, the condition \eqref{EENR} is also clear; see Figure \ref{Fent}.
\begin{figure}[htb]
\centering 
\includegraphics[keepaspectratio, scale=0.25]{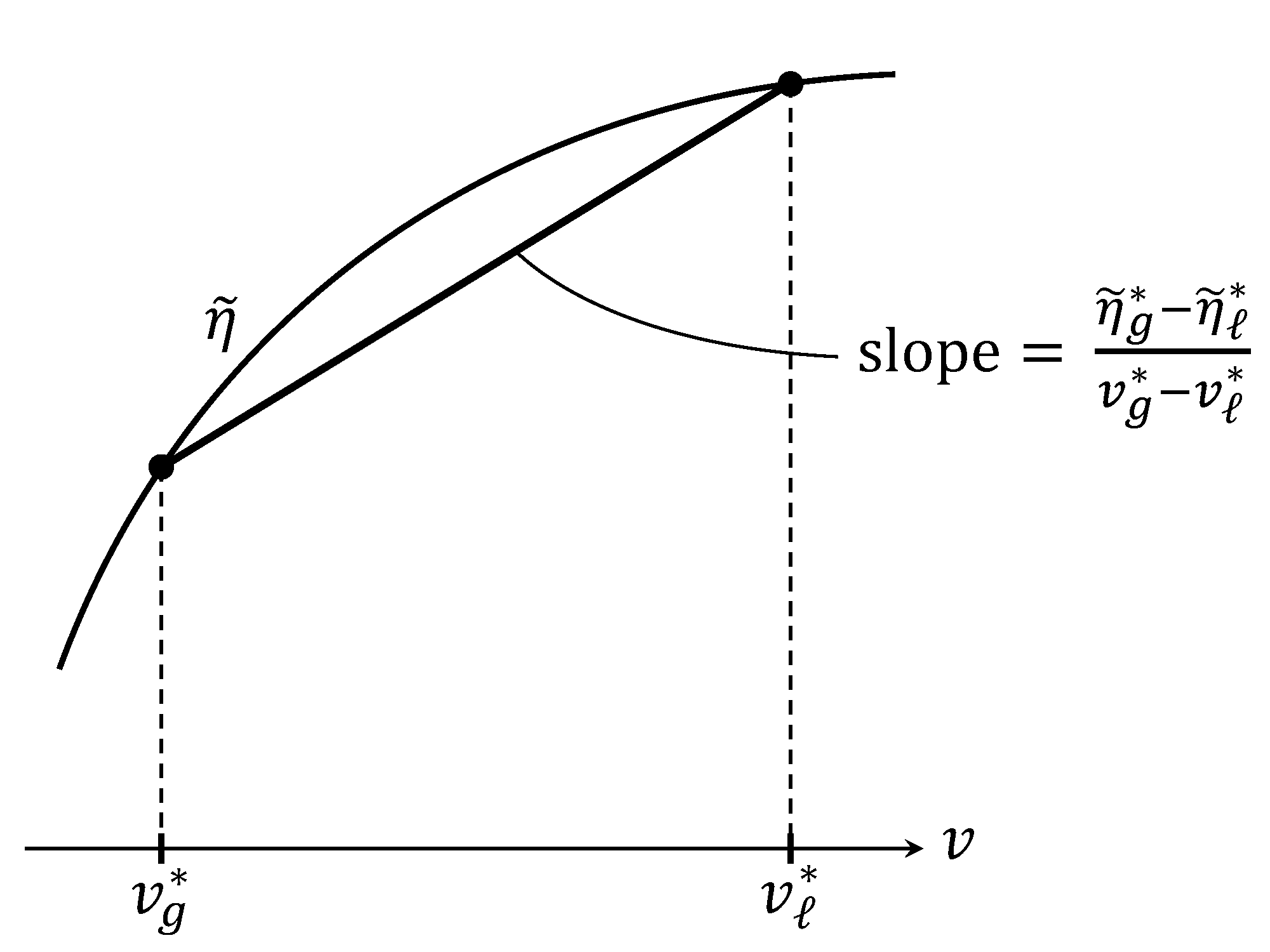} 
\caption{graph of $\widetilde{\eta}$ \label{Fent}}
\end{figure}
 By a direct and tedious calculation, we conclude that $v_g^*$, $v_\ell^*$ satisfy the monotonicity condition (S2) so $\tilde{\eta}_u$ at $v_\ell^*$ can be interpreted as $\tilde{\eta}_u(v_\ell^*,\theta_b) = - \partial_\theta \tilde{\psi}_u (v_\ell^*,\theta_b)$, where $\tilde{\psi}_u$ is the convexified $\psi$ in $v$.
 We conclude this section by noting that symmetric condition $\partial_\theta p_*(\theta_b)>\partial_\theta\tilde{p}_g(v_g^*,\theta_b)$, $\partial_v \tilde{p}_\ell(v_\ell^*,\theta_b)<0$ gives the solvability of \eqref{ESt}, \eqref{ESt2} for $v_\ell$, $\theta$ by fixing $v_g=v_g^*(\theta_b)$ provided that $Z$ is small.
 The monotonicity condition for $\theta$ also follows.

If $j_\Gamma$ is not small, there is a chance that there exist no $\theta$, $\rho_2 \neq \rho_1$ satisfying \eqref{ESE}.
To see this phenomenon, we first recall an elementary fact for a bitangent line.
\begin{lemma} \label{LBIT}
Let $f=f(\rho)$ be a $C^1$ function on some open interval $I$.
 Let $\rho_1,\rho_2\in I$ be $\rho_2<\rho_1$.
 There is a straight line through $\left(\rho_2,f(\rho_2)\right)$ and $\left(\rho_1,f(\rho_1)\right)$ which is tangent to the graph of $f$ at $\left(\rho_2,f(\rho_2)\right)$, $\left(\rho_1, f(\rho_1)\right)$ if and only if
\begin{gather*}
	\partial_\rho f(\rho_1) = \partial_\rho f(\rho_2) \\
	\rho_1 \partial_\rho f(\rho_1) - f(\rho_1) 
	= \rho_2 \partial_\rho f(\rho_2) - f(\rho_2).
\end{gather*}
This line is often called a bitangent line.
\end{lemma}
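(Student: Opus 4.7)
The plan is to unpack what it means for a line to be tangent to the graph of $f$ at a given point and then translate the matching-at-both-points requirement into the two stated identities. Since $f \in C^1$, the tangent line to the graph at $\rho_i$ is uniquely
\[
	y = f(\rho_i) + \partial_\rho f(\rho_i)(\rho - \rho_i),
\]
i.e.\ $y = m_i \rho + c_i$ with slope $m_i = \partial_\rho f(\rho_i)$ and intercept $c_i = f(\rho_i) - \rho_i\,\partial_\rho f(\rho_i)$. A single line is tangent at both $\rho_1$ and $\rho_2$ precisely when $(m_1,c_1) = (m_2,c_2)$, which is the conjunction of the two displayed equations (the second one, after multiplying by $-1$, reads $c_1 = c_2$).

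For the forward implication I would assume such a bitangent line $L$ exists, note that $L$ must coincide with the unique tangent line at each $\rho_i$, and simply read off $m_1 = m_2$ and $c_1 = c_2$ from $L$. For the converse, given both identities, I would set $m := \partial_\rho f(\rho_1) = \partial_\rho f(\rho_2)$ using the first hypothesis, define $L : y = m\rho + c$ with $c := f(\rho_1) - \rho_1 m$, and observe that the second hypothesis rearranges to $c = f(\rho_2) - \rho_2 m$; hence $L$ passes through both $\bigl(\rho_i,f(\rho_i)\bigr)$ and has slope $\partial_\rho f(\rho_i)$ there, so it is tangent at each.

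There is no genuine obstacle here: the lemma is a bookkeeping statement about when the slope--intercept data of the tangent lines at two distinct points agree. The only minor care is in the sign convention of the second identity, since $\rho_i\,\partial_\rho f(\rho_i) - f(\rho_i)$ is the negative of the $y$-intercept $c_i$; one must not mishandle this sign when comparing intercepts.
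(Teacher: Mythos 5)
Your argument is correct and complete: identifying the two displayed conditions with equality of slopes and of $y$-intercepts of the unique tangent lines at $\rho_1$ and $\rho_2$ (with the sign of the intercept handled as you note) is exactly the content of the lemma. The paper itself states this as an elementary fact and gives no proof, so your write-up simply supplies the standard argument the authors left implicit.
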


We next derive conditions equivalent to $\eqref{EI2}_1$ and \eqref{EI5}.
 If $\sigma H_\Gamma=0$ and viscous stress $S=0$, then $\eqref{EI2}_1$ and \eqref{EI5} are of the form, respectively
\begin{align}
	& \llbracket 1/\rho \rrbracket j_\Gamma^2
	+ \llbracket p \rrbracket = 0, \label{ENV1} \\
	& \llbracket \psi \rrbracket + \left\llbracket \frac{1}{2\rho^2} \right\rrbracket j_\Gamma^2
	+ \llbracket p/\rho \rrbracket = 0. \label{ENV2}
\end{align}
It is convenient to introduce the volume specific Helmholtz energy $\Psi$.
 It is of the form
\[
	\Psi = \rho\psi
\]
so that $p=\rho\partial_\rho\Psi-\Psi=\rho(\partial_\rho\Psi-\psi)$.
 In the case $j_\Gamma=0$, \eqref{ENV2} can be written
\[
	\llbracket \partial_\rho \Psi \rrbracket = 0
\]
while \eqref{ENV1} becomes
\[
	\llbracket p \rrbracket = 0.
\]
By Lemma \ref{LBIT}, this implies that there is a bitangent line to the graph of $\Psi$ from $\left(\rho_g,\Psi(\rho_g,\theta)\right)$ to $\Bigl(\rho_\ell,\Psi(\rho_\ell,\theta)\Bigr)$ for the fixed $\theta$.
 For general $j_\Gamma$, we introduce
\begin{equation} \label{EHMod}
	\psi^j = \psi - \frac{1}{2\rho^2} j_\Gamma^2, \quad
	\Psi^j = \rho\psi^j = \Psi - \frac{1}{2\rho} j_\Gamma^2
\end{equation}
and observe that
\begin{align*}
	\partial_\rho \Psi^j
	&= \psi^j + \rho \partial_\rho \psi^j
	= \psi - \frac{j_\Gamma^2}{2\rho^2} + \frac{p}{\rho} + \frac{\rho}{\rho^3} j_\Gamma^2 \\
	&= \psi + \frac{j_\Gamma^2}{2\rho^2} + \frac{p}{\rho}.
\end{align*}
Thus, \eqref{ENV2} is equivalent to
\begin{equation} \label{EEGib}
	\llbracket \partial_\rho \Psi^j \rrbracket = 0.
\end{equation}
If we set $p^j=\rho\partial_\rho\Psi^j-\Psi^j$, we observe that
\begin{align*}
	p^j 	&= \rho \left( \psi + \frac{j_\Gamma^2}{2 \rho^2} + \frac{p}{\rho} - \psi + \frac{j_\Gamma^2}{2 \rho^2} \right) \\
	&= p + \frac1\rho j_\Gamma^2.
\end{align*}
Thus, \eqref{ENV1} is equivalent to
\begin{equation} \label{EEMom}
	\llbracket p^j \rrbracket = 0.
\end{equation}
In fact, we have proved
\begin{thm} \label{TMG}
Let $\psi^j$, $\Psi^j$ be given by \eqref{EHMod}.
Then $p^j = p+(1/\rho)j_\Gamma^2$, $\partial_\rho \Psi^j = \psi+(1/2\rho^2)j_\Gamma^2+p/\rho$ if $p^j=\rho\partial_\rho\Psi^j-\Psi^j$, $p=\rho\partial_\rho\Psi-\Psi$.
In particular, \eqref{ENV1}, \eqref{ENV2} are equivalent to \eqref{EEGib} and \eqref{EEMom}.
\end{thm}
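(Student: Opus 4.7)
The plan is to verify the two identities $p^j = p + (1/\rho)j_\Gamma^2$ and $\partial_\rho\Psi^j = \psi + (1/2\rho^2)j_\Gamma^2 + p/\rho$ by direct calculation from the definitions in \eqref{EHMod}, and then deduce the equivalence of \eqref{ENV1}, \eqref{ENV2} with \eqref{EEMom}, \eqref{EEGib} by taking jumps across $\Gamma$. The only subtle point is that $j_\Gamma$ is common to both phases (by \eqref{ESM}), so a jump $\llbracket\,\cdot\,j_\Gamma^2\rrbracket$ pulls the factor $j_\Gamma^2$ outside the bracket.

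First I would compute $\partial_\rho\Psi^j$. Since $\Psi^j = \rho\psi^j$, the product rule gives $\partial_\rho\Psi^j = \psi^j + \rho\,\partial_\rho\psi^j$. From $\psi^j = \psi - (1/2\rho^2)j_\Gamma^2$ we get $\partial_\rho\psi^j = \partial_\rho\psi + (1/\rho^3)j_\Gamma^2$, and using $p = \rho^2\partial_\rho\psi$ we obtain $\rho\,\partial_\rho\psi = p/\rho$. Assembling,
\[
\partial_\rho\Psi^j = \psi - \frac{j_\Gamma^2}{2\rho^2} + \frac{p}{\rho} + \frac{j_\Gamma^2}{\rho^2} = \psi + \frac{j_\Gamma^2}{2\rho^2} + \frac{p}{\rho},
\]
which is the second identity.

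Next I would compute $p^j = \rho\,\partial_\rho\Psi^j - \Psi^j$. Substituting the formula just derived together with $\Psi^j = \rho\psi - (1/2\rho)j_\Gamma^2$ yields
\[
p^j = \rho\psi + \frac{j_\Gamma^2}{2\rho} + p - \rho\psi + \frac{j_\Gamma^2}{2\rho} = p + \frac{1}{\rho}j_\Gamma^2,
\]
giving the first identity.

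Finally, to obtain the equivalences I would take the jump across $\Gamma$. Since $j_\Gamma$ is common to both sides by \eqref{ESM}, we have $\llbracket p^j\rrbracket = \llbracket p\rrbracket + \llbracket 1/\rho\rrbracket j_\Gamma^2$, so \eqref{EEMom} is exactly \eqref{ENV1}. Similarly, $\llbracket\partial_\rho\Psi^j\rrbracket = \llbracket\psi\rrbracket + \llbracket 1/(2\rho^2)\rrbracket j_\Gamma^2 + \llbracket p/\rho\rrbracket$, so \eqref{EEGib} is exactly \eqref{ENV2}. There is no real obstacle here; the content of the theorem is essentially that the modified thermodynamic quantities $\psi^j$ and $\Psi^j$ absorb the $j_\Gamma$-dependent interface terms so that the Gibbs–Thomson and momentum jump conditions take the same form as in the no-phase-transition case, and the verification is a one-line bookkeeping of the product rule.
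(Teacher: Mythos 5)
Your computation is correct and coincides with the paper's own proof, which carries out exactly the same product-rule calculation of $\partial_\rho\Psi^j$ and $p^j$ and then reads off the equivalences by taking jumps (using that $j_\Gamma$ is common to both phases). No differences worth noting.
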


If the density of $\Omega_i$ is $\rho_i$ ($i=1,2$), then, by Lemma \ref{LBIT}, the conditions \eqref{EEGib} and \eqref{EEMom} say that there is a bitangent line of the graph of $\Psi^j$ from $\left(\rho_2,\Psi^j(\rho_2)\right)$ to $\left(\rho_1,\Psi^j(\rho_1)\right)$.
However, if $j_\Gamma$ is sufficiently large, it might happen that there is no bitangent line at least for the van der Waals' Helmholtz energy because of the term $- j_\Gamma^2/2\rho$ in the definition of $\Psi^j$.
Indeed, in the case of the van der Waals' Helmholtz energy
\[
	\Psi = k_1 \rho\theta (1-\log\theta) + k_2 \rho\theta \log\frac{\rho}{1-b\rho} - a\rho^2,
\]
we get
\[
	\partial_\rho^2 \Psi = k_2 \theta \left(\frac{b}{(1-b\rho)^2} + \frac1\rho + \frac{b}{1-b\rho} \right) - 2a.
\]
This function is positive near $\rho=0$ and $\rho$ close to $1/b$.
But it may change sign twice if $\theta$ is not very large compared with $a$.
Note that $\partial_\rho^2\Psi^j$ is negative near $\rho=0$.
If $j_\Gamma$ is not large, $\partial_\rho^2 \Psi^j$ changes its sign three times.
However, if $j_\Gamma$ is large, $\partial_\rho^2\Psi^j$ changes its sign only once no matter what the temperature $\theta>0$ is; see Figure \ref{FJvan1} and Figure \ref{FJvan2}.
\begin{figure}[htb]
  \begin{minipage}[b]{0.45\linewidth}
	\centering
	\includegraphics[keepaspectratio, scale=0.25]{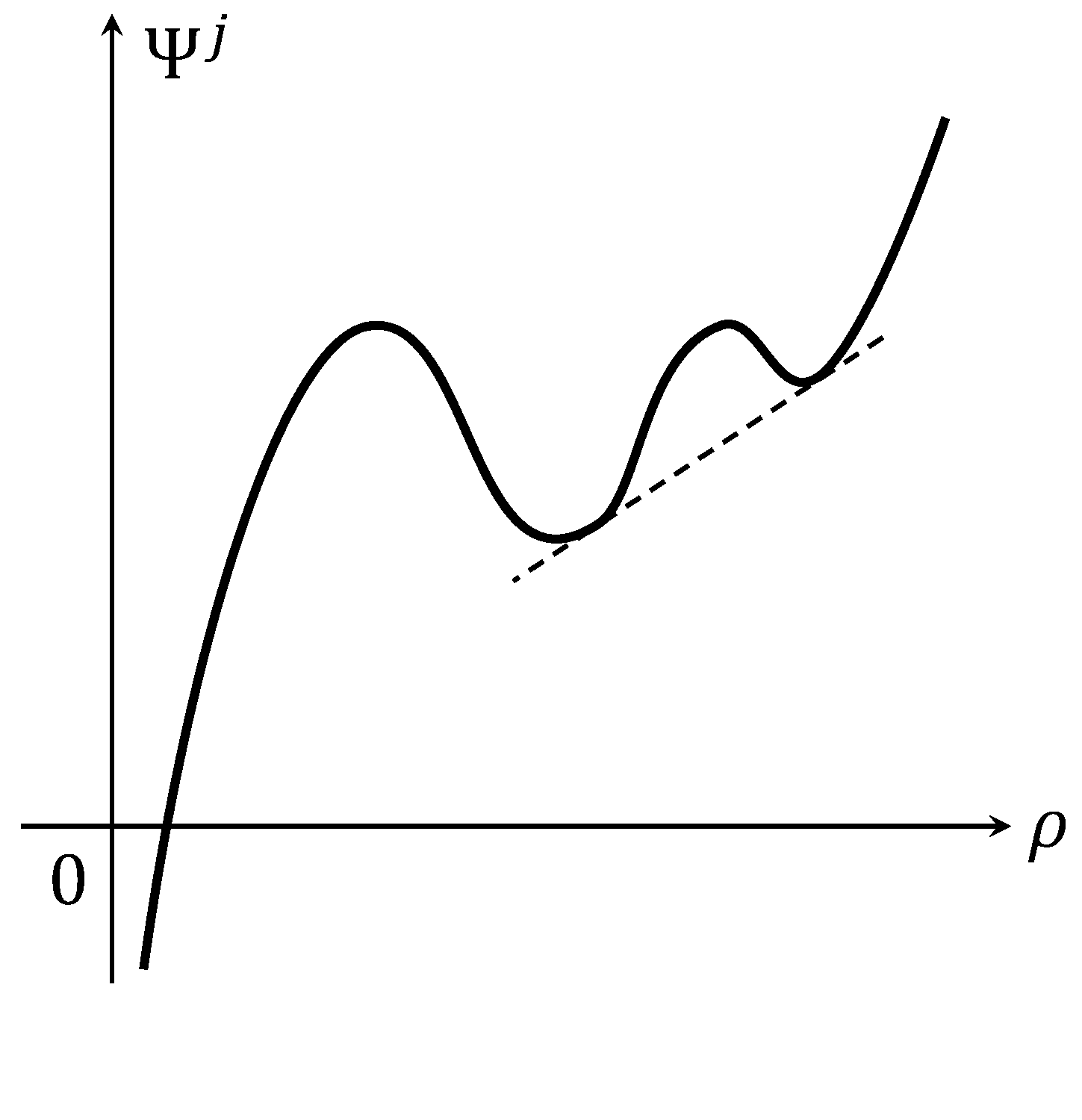}
\caption{graph of $\Psi^j$ for small $j_\Gamma$ with fixed $\theta$\label{FJvan1}}
  \end{minipage}
  \begin{minipage}[b]{0.03\linewidth}
	  \centering 
\includegraphics[keepaspectratio, scale=0.03]{blank.png}
  \end{minipage}
  \begin{minipage}[b]{0.45\linewidth}
	\centering
	\includegraphics[keepaspectratio, scale=0.25]{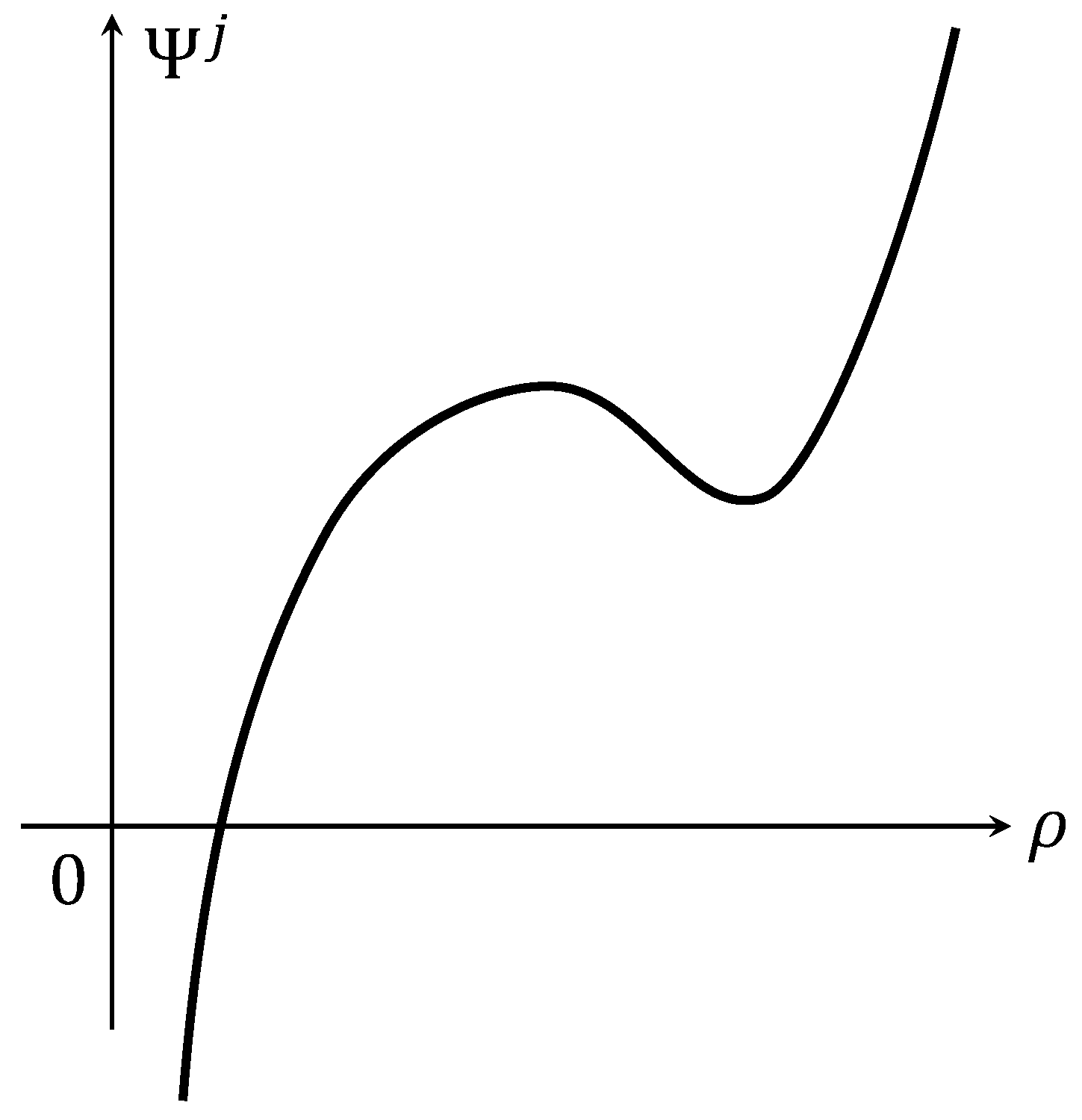}
\caption{graph of $\Psi^j$ for large $j_\Gamma$ with fixed $\theta$\label{FJvan2}}
  \end{minipage}
\end{figure}
We conclude the following theorem.
\begin{thm} \label{TVanj}
If $j_\Gamma$ is large, for any $\theta>0$, there is no pair $(\rho_1,\theta)$, $(\rho_2,\theta)$ with $\rho_1\neq\rho_2$ satisfying \eqref{ENV1} and \eqref{ENV2} for the van der Waals' energy.
\end{thm}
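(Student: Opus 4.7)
The plan is to reduce the statement to the non-existence of a bitangent line for the graph of $\rho \mapsto \Psi^j(\rho,\theta)$, via Theorem \ref{TMG} and Lemma \ref{LBIT}, and then rule out bitangents by checking the concavity structure of $\Psi^j(\cdot,\theta)$. By Theorem \ref{TMG}, the jump conditions \eqref{ENV1} and \eqref{ENV2} are equivalent to $\llbracket p^j\rrbracket=0$ and $\llbracket\partial_\rho\Psi^j\rrbracket=0$. Using $p^j=\rho\partial_\rho\Psi^j-\Psi^j$, these two conditions together say that the tangent lines to $\rho\mapsto\Psi^j(\rho,\theta)$ at $\rho_1$ and at $\rho_2$ have the same slope \emph{and} the same intercept. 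By Lemma \ref{LBIT} (applied to $f=\Psi^j(\cdot,\theta)$), this is precisely the existence of a bitangent line to the graph with two distinct contact points $\rho_1\neq\rho_2$. So it suffices to show that for $j_\Gamma$ large, $\Psi^j(\cdot,\theta)$ admits no bitangent, for any $\theta>0$.

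Second, I will analyze $\partial_\rho^2\Psi^j = \partial_\rho^2\Psi - j_\Gamma^2/\rho^3$ on $(0,1/b)$, with
\[
\partial_\rho^2\Psi(\rho,\theta)=k_2\theta\left(\frac{b}{(1-b\rho)^2}+\frac{1}{\rho}+\frac{b}{1-b\rho}\right)-2a.
\]
Setting $\phi(\rho,\theta):=\rho^3\partial_\rho^2\Psi(\rho,\theta)$, the sign of $\partial_\rho^2\Psi^j$ matches the sign of $\phi-j_\Gamma^2$. One checks $\phi(\cdot,\theta)\to 0$ as $\rho\to 0^+$ and $\phi(\cdot,\theta)\to+\infty$ as $\rho\to(1/b)^-$, and that near $\rho=1/b$ the leading piece $k_2\theta\,b\rho^3/(1-b\rho)^2$ is strictly increasing to $+\infty$. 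Thus one can choose $\rho_0=\rho_0(\theta)<1/b$ so that $\phi(\cdot,\theta)$ is strictly increasing on $[\rho_0,1/b)$ and set $M(\theta):=\sup_{(0,\rho_0]}\phi(\cdot,\theta)<\infty$. For any $j_\Gamma^2>M(\theta)$ the super-level set $\{\phi(\cdot,\theta)>j_\Gamma^2\}$ is contained in the monotone tail $[\rho_0,1/b)$ and hence equals $(\rho^*,1/b)$ for a unique $\rho^*$. Consequently $\Psi^j(\cdot,\theta)$ is strictly concave on $(0,\rho^*)$ and strictly convex on $(\rho^*,1/b)$, with a single inflection.

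Finally, I will show that any $C^2$ function that is strictly concave then strictly convex admits no bitangent with distinct contact points. Suppose for contradiction that a common tangent line $L$ touches $\Psi^j(\cdot,\theta)$ at $\rho_1<\rho_2$, and set $D:=\Psi^j(\cdot,\theta)-L$. Then $D(\rho_1)=D(\rho_2)=0$ and $D'(\rho_1)=D'(\rho_2)=0$. Rolle's theorem applied to $D'$ produces a zero of $D''$ in $(\rho_1,\rho_2)$, which forces the inflection $\rho^*$ into $(\rho_1,\rho_2)$. On $(\rho_1,\rho^*)$ we have $D''<0$ with $D'(\rho_1)=0$, hence $D'<0$ strictly; on $(\rho^*,\rho_2)$ we have $D''>0$ with $D'(\rho_2)=0$, again yielding $D'<0$. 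Thus $D$ is strictly decreasing on $(\rho_1,\rho_2)$, contradicting $D(\rho_1)=D(\rho_2)=0$.

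The main obstacle is the middle step: for small $\theta$ the term $-2a\rho^3$ can create an interior dip in $\phi(\cdot,\theta)$, so one must carefully isolate a monotone terminal interval $(\rho_0,1/b)$, estimate $M(\theta)$ on its complement, and then choose $j_\Gamma^2$ so that the super-level set stays inside the monotone tail. This proves the claim for each fixed $\theta>0$ with a largeness threshold on $j_\Gamma$ that may depend on $\theta$.
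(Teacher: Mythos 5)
Your proposal is correct and follows essentially the same route as the paper: the paper's own argument for Theorem \ref{TVanj} is precisely the reduction, via Theorem \ref{TMG} and Lemma \ref{LBIT}, to the existence of a bitangent line of $\rho\mapsto\Psi^j(\rho,\theta)$, followed by the claim that $\partial_\rho^2\Psi^j=\partial_\rho^2\Psi-j_\Gamma^2/\rho^3$ changes sign only once when $j_\Gamma$ is large. The paper leaves the last two steps at the level of a picture, so your super-level-set analysis of $\phi=\rho^3\partial_\rho^2\Psi$ and the Rolle-type argument that a strictly concave-then-convex function admits no bitangent with distinct contact points are details the paper omits rather than a different method. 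The one point to tighten is uniformity in $\theta$: the theorem is intended with a single largeness threshold for $j_\Gamma$ valid for all $\theta>0$ (``no matter what the temperature $\theta>0$ is''), whereas your threshold $M(\theta)$ is a priori $\theta$-dependent, as you note. In fact $\sup_{\theta>0}M(\theta)<\infty$: writing $\phi=k_2\theta g(\rho)-2a\rho^3$ with $g(\rho)=b\rho^3/(1-b\rho)^2+\rho^2+b\rho^3/(1-b\rho)$, at any point where $\partial_\rho\phi\le0$ one has $k_2\theta\le 6a\rho^2/g'(\rho)$ and hence $\phi\le k_2\theta g(\rho)\le 6a\,\rho^2 g(\rho)/g'(\rho)$, and the function $\rho^2 g/g'$ is bounded on $(0,1/b)$ (it tends to $0$ at both endpoints); since the supremum of $\phi(\cdot,\theta)$ over the non-monotone part is either nonpositive or attained at a critical point, this bounds $M(\theta)$ uniformly and upgrades your conclusion to the uniform statement.
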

We conclude this section by mentioning a well-known relation between $\tilde{\psi}$ and $\Psi$.
By definition,
\[
	\partial_\rho\Psi = \rho\partial_\rho\psi + \psi
	= -v\partial_v\tilde{\psi} + \tilde{\psi}
	\quad \text{where} \quad \rho=1/v
\]
and similarly
\[
	\partial_v\tilde{\psi} = -\rho\partial_\rho \Psi + \Psi.
\]
Thus, $\llbracket \partial_\rho\Psi\rrbracket=0$, $\llbracket -\rho\partial_\rho\Psi + \Psi\rrbracket=0$ is equivalent to $\llbracket \partial_v\tilde{\psi}\rrbracket=0$, $\llbracket v\partial_v\tilde{\psi}-\tilde{\psi}\rrbracket=0$.
By Lemma \ref{LBIT}, this means that $\tilde{\psi}$ has a bitangent from $\left(v_\ell^*,\tilde{\psi}(v_\ell^*)\right)$ to $\left(v_g^*,\tilde{\psi}(v_g^*)\right)$ if and only if $\Psi$ has a bitangent from $\left(\rho_g^*,\Psi(\rho_g^*)\right)$ to $\left(\rho_\ell^*,\Psi(\rho_\ell^*)\right)$ at a fixed temperature.
We also note that the non-decreasing behavior of the pressure $p$ with respect to $\rho$ is equivalent to the convexity of $\tilde{\psi}$ in $v$ since $p=-\partial_v\tilde{\psi}$.
This also implies the convexity of $\Psi$ in $\rho$.
(In fact, it is equivalent.)
Indeed, $p=\rho\partial_\rho\Psi-\Psi$ and $\partial_\rho p=\partial_\rho^2\Psi$. 

\section{Stationary solutions to the Stefan problem} \label{S4} 

We consider a stationary solution to \eqref{EH3} with \eqref{EI2S}, \eqref{EI3S}, \eqref{EI4S}, \eqref{EI1SS} in a half line $(0,\infty)$, i.e., $L=\infty$, which was written at the end of Section \ref{S2}.
We consider the situation that the liquid region occupies near the entrance.

We give the liquid density $\rho_1=\rho_\ell$ which is a positive constant.
We also give the liquid velocity $u_1=u_\ell$ which is also a positive constant.
In Section \ref{S3}, we obtain that if the phase flux $j_\Gamma=\rho_\ell u_\ell$ is sufficiently small, there is a temperature $\theta_*$ and the gas density $\rho_g(<\rho_\ell)$ which solves $(\rho_2,\theta)$ equations of the forms
\begin{align*}
	& (\rho_1 u_1)^2 = \left(p_1(\rho_1,\theta_*) - p_2(\rho_2,\theta_*)\right) / (v_2-v_1), \quad
	v_i=1/\rho_i, \quad \rho_1=\rho_\ell, \quad \rho_2 = \rho_g, \\
	& \psi_2 (\rho_2, \theta_\ast) - \psi_1(\rho_1,\theta_*) + (v_1-v_2) \frac{p_1(\rho_1,\theta_*)+p_2(\rho_2,\theta_*)}{2} = 0
\end{align*}
under physically reasonable assumptions on the Helmholtz energy.
 We postulate that these two equations are solvable for a given $\rho_1$ and $j_\Gamma=\rho_1u_1$ to find $\rho_2=\rho_g$ and $\theta_*$.
 Since $u_2$ is determined by $u_2 = \rho_1u_1/\rho_2$, our system is reduced to
\begin{align}
	& \kappa_1 j_\Gamma \partial_{x_1} \theta_1 - \partial_{x_1}(d_1 \partial_{x_1} \theta_1) = \rho_1 r_1 &&\hspace{-9em}\text{in}\quad (0,x_*), \label{EEH1} \\
	& \kappa_2 j_\Gamma \partial_{x_1} \theta_2 - \partial_{x_1}(d_2 \partial_{x_1} \theta_2) = \rho_2 r_2 &&\hspace{-9em}\text{in}\quad (x_*,\infty), \label{EEH2} \\
	& \theta_1 = \theta_2 = \theta_* &&\hspace{-9em}\text{at}\quad x=x_*, \label{EEI3S} \\
	& \ell j_\Gamma + d_2 \partial_{x_1} \theta_2 - d_1 \partial_{x_1} \theta_1 = 0 &&\hspace{-9em}\text{at}\quad x=x_*. \label{EEI4S}
\end{align}
From physical requirement, we usually assume that $\kappa_i>0$ and $d_i>0$ for $i=1,2$.
Moreover, $\ell=-\theta_*\llbracket\eta\rrbracket<0$ if $\Omega_1$ is a liquid region and $\Omega_2$ is a vapor region.
To see the feature of the problem, we assume that these physical quantities are constants.
More precisely,
\begin{enumerate}
\item[(PC)]
The specific heat $\kappa_i(\rho_i,\theta)$ is a positive constant for $i=1,2$ and the latent heat $\ell=-\theta_*\llbracket\eta\rrbracket$ is a negative constant.
(These assumptions impose restrictions on the Helmholtz energy, especially the entropy.
 They are fulfilled, for example, for van der Waals' energy.)
The thermal diffusivity $d_i$ is a positive constant.
\end{enumerate}

We next impose the boundary condition for $\theta_i$.
At the entrance $\theta_1(0)=\theta_\mathrm{in}(<\theta_*)$ is given while at the space infinity, $\partial_x\theta_2$ is assumed to be bounded.
The second condition is reasonable if the effect of transport, i.e., the term involving $j_\Gamma$ in the second equation dominates the diffusion term.
We further assume that $\rho_1r_1=\rho_2r_2=r$ is a positive constant.
We give a necessary and sufficient condition such that the dryout point exists.
\begin{thm} \label{TUES}
Assume (PC) and that $r=\rho_1r_1=\rho_2r_2$ is a positive constant.
Assume that $\theta_*$ is given.
Then
\[
	(-\ell) \leq \frac{d_2 r}{\kappa_2 j_\Gamma^2}
\]
if and only if there exists a unique solution $(\theta_1,\theta_2,x_*)$ to \eqref{EEH1}, \eqref{EEH2}, \eqref{EEI3S}, \eqref{EEI4S} satisfying $\theta_1\leq\theta_*$ and $\theta_1(0)=\theta_\mathrm{in}<\theta_*$ under the linear growth assumption on $\theta_2$ as $x\to\infty$.
Moreover, the function $x_*(\theta_\mathrm{in},j_\Gamma)$ is strictly decreasing in $\theta_\mathrm{in}$ and strictly increasing in $j_\Gamma$.
Furthermore $x_*\to\infty$ as $j_\Gamma\to\infty$ or $\theta_\mathrm{in}\to-\infty$ if one fixes the remaining variable. 
\end{thm}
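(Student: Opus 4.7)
The strategy is to decouple the problem into two ODEs, reduce the interface matching to a single scalar equation $H(x_*)=\theta_*-\theta_{\mathrm{in}}$, and then analyze $H$.

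\emph{Step 1 (vapor side).} Since $\theta_2$ must have bounded derivative as $x_1\to\infty$, the exponentially growing mode of the linear equation \eqref{EEH2} vanishes, and together with \eqref{EEI3S} this determines $\theta_2$ on $[x_*,\infty)$ explicitly as $\theta_2(x_1)=\theta_*+\tfrac{r}{\kappa_2 j_\Gamma}(x_1-x_*)$. In particular, $d_2\partial_{x_1}\theta_2(x_*)=d_2 r/(\kappa_2 j_\Gamma)$, and \eqref{EEI4S} then pins down
\[
	\partial_{x_1}\theta_1(x_*) \;=\; \frac{1}{d_1}\Bigl(\ell j_\Gamma + \frac{d_2 r}{\kappa_2 j_\Gamma}\Bigr) \;=:\; A_*,
\]
a number which depends only on $j_\Gamma$ and the prescribed data, not on $x_*$. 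If a solution with $\theta_1\le\theta_*$ on $[0,x_*]$ (equality at $x_*$) exists, then $\theta_1$ attains a maximum at the right endpoint, so $A_*=\partial_{x_1}\theta_1(x_*)\ge 0$; this is exactly \eqref{EDry} and gives necessity.

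\emph{Step 2 (sufficiency and the scalar equation).} Assume $A_*\ge 0$. I would integrate \eqref{EEH1} as an initial value problem at $x_1=x_*$ with data $\theta_1(x_*)=\theta_*$ and $\partial_{x_1}\theta_1(x_*)=A_*$; the closed form is $\theta_1=c_0+c_1 e^{\lambda_1 x_1}+\beta x_1$ with $\lambda_1=\kappa_1 j_\Gamma/d_1$, $\beta=r/(\kappa_1 j_\Gamma)$. Matching $\theta_1(0)=\theta_{\mathrm{in}}$ yields, after elementary simplification, the single transcendental equation
\[
	H(x_*) \;:=\; \beta\,x_* \;+\; \frac{A_*-\beta}{\lambda_1}\bigl(1-e^{-\lambda_1 x_*}\bigr) \;=\; \theta_*-\theta_{\mathrm{in}}.
\]
One checks $H(0)=0$, $H(x_*)\to\infty$ as $x_*\to\infty$, and $H'(x_*)=\beta+(A_*-\beta)e^{-\lambda_1 x_*}$ always lies between the two nonnegative endpoints $A_*$ and $\beta$ and is strictly positive on $(0,\infty)$. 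Hence the equation has a unique root $x_*>0$. To confirm $\theta_1\le\theta_*$, observe that $\partial^2_{x_1}\theta_1=c_1\lambda_1^2 e^{\lambda_1 x_1}$ does not change sign, so $\partial_{x_1}\theta_1$ is monotone; since $\partial_{x_1}\theta_1(x_*)=A_*\ge 0$, this forces $\partial_{x_1}\theta_1\ge 0$ throughout $[0,x_*]$, whence $\theta_1$ is nondecreasing and bounded above by $\theta_*$.

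\emph{Step 3 (monotonicity and limits).} Apply the implicit function theorem to $H(x_*;\theta_{\mathrm{in}},j_\Gamma)=\theta_*-\theta_{\mathrm{in}}$. Since $\partial_{x_*}H>0$ and $\partial_{\theta_{\mathrm{in}}}(\theta_*-\theta_{\mathrm{in}})=-1$, the sign $dx_*/d\theta_{\mathrm{in}}<0$ is immediate, and combined with $H(\infty)=\infty$ the limit $x_*\to\infty$ as $\theta_{\mathrm{in}}\to-\infty$ is clear. The monotonicity and limit in $j_\Gamma$ are the main computational obstacle, since $\beta$, $\lambda_1$, and $A_*$ all depend on $j_\Gamma$ in competing ways. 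I would compute $\partial_{j_\Gamma}H$ at fixed $x_*$ and show it is negative throughout the feasible range (so that $dx_*/dj_\Gamma>0$), exploiting that $A_*$ is strictly decreasing in $j_\Gamma$ (because $\ell<0$) together with elementary convexity bounds such as $0\le 1-e^{-\lambda_1 x_*}\le\lambda_1 x_*$ to control the remainder. The divergence $x_*\to\infty$ in the $j_\Gamma$ limit should then follow by inserting the explicit $j_\Gamma$-dependence into $H(x_*)=\theta_*-\theta_{\mathrm{in}}$ and showing that any bounded $x_*$ makes the left side too small to match the right side in the limit.
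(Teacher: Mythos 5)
Your Steps 1 and 2 follow essentially the same route as the paper: the vapor side is solved explicitly under the bounded-derivative condition, the Stefan condition \eqref{EEI4S} pins down $\partial_{x_1}\theta_1(x_*)=A_*$, necessity of \eqref{EDry} comes from $A_*\ge 0$ at the right-endpoint maximum, and your scalar equation $H(x_*)=\theta_*-\theta_{\mathrm{in}}$ is exactly the paper's relation \eqref{EZF} (in unscaled variables) read with $y_0$ as the unknown rather than $z_0$; both reduce existence and uniqueness to strict monotonicity of the same transcendental function. One logical slip in Step 2: ``$\partial_{x_1}\theta_1$ is monotone and equals $A_*\ge0$ at $x_*$, hence is nonnegative on $[0,x_*]$'' is not a valid inference when $\partial_{x_1}\theta_1$ is increasing. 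The conclusion does hold, because $\partial_{x_1}\theta_1(0)=(A_*-\beta)e^{-\lambda_1 x_*}+\beta$ is a convex combination of $A_*\ge0$ and $\beta>0$, so $\partial_{x_1}\theta_1$ is nonnegative at both endpoints and monotone in between; you should add this half-line.

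The genuine gap is in Step 3: the strict monotonicity of $x_*$ in $j_\Gamma$ and the limit $x_*\to\infty$ are only announced (``I would compute $\partial_{j_\Gamma}H$ \dots and show it is negative''), and this is precisely the part of the ``Moreover/Furthermore'' clauses where something nontrivial must be checked, since $\beta=r/(\kappa_1 j_\Gamma)$ decreases, $\lambda_1=\kappa_1 j_\Gamma/d_1$ increases and $A_*$ decreases in $j_\Gamma$, and the resulting contributions to $\partial_{j_\Gamma}H$ do not all have the same sign (e.g.\ $-\tfrac{s}{\lambda_1}\partial_{j_\Gamma}\beta>0$ with $s=1-e^{-\lambda_1 x_*}$). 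The plan is salvageable: with $t=\lambda_1 x_*$ one finds, at fixed $x_*$,
\[
 \partial_{j_\Gamma}H=\frac{s}{\lambda_1}\,\partial_{j_\Gamma}A_*+\frac{\beta}{j_\Gamma\lambda_1}\bigl(2s-t-te^{-t}\bigr)-\frac{A_*}{j_\Gamma\lambda_1}\bigl(s-te^{-t}\bigr),
\]
and all three terms are $\le0$ (the first $<0$) because $\partial_{j_\Gamma}A_*<0$, because $2(1-e^{-t})\le t+te^{-t}$ (integrate $(1+t)e^{-t}\le1$ from $0$ to $t$), and because $1-e^{-t}\ge te^{-t}$ with $A_*\ge0$. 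But as written your proposal contains neither this computation nor the argument for $x_*\to\infty$ as $j_\Gamma\to\infty$, so those claims remain unproved. For comparison, the paper packages all of this into the free-boundary Lemma \ref{LUEFP} (monotonicity of $\hat{x}(y_0,z_0)$ with $z_0=A_*$ strictly decreasing in $j_\Gamma$), which is its way of isolating the same monotonicity analysis that you have deferred.
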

%
For the proof, we prepare a simple lemma on a free boundary problem for a second-order ordinary differential equation (ODE) of the form
\begin{align} 
\begin{aligned} \label{EFPD}
\left\{
\begin{array}{l}
	by' - ay'' - c = 0, \quad 0<x<\hat{x}, \\
	y(\hat{x}) = 0, \\
	y(0) = -y_0, \\
	y'(\hat{x}) = z_0.
\end{array}
\right.
\end{aligned}
\end{align}
\begin{lemma} \label{LUEFP}
Assume that $a$, $b$ and $c$ are positive constants.
For given positive constants $y_0$ and $z_0$, there exists a unique $(y,\hat{x})$ with $\hat{x}>0$ which solves \eqref{EFPD}.
Moreover, $\hat{x}=\hat{x}(y_0,z_0)$ is strictly increasing in $y_0$ and strictly decreasing in $z_0$.
Furthermore, there is $\hat{x}_c(y_0)>0$ such that
\begin{gather*}
	\lim_{y_0\to\infty} \hat{x}(y_0,z_0) = \infty, \quad
	\lim_{y_0\downarrow0} \hat{x}(y_0,z_0) = 0, \quad\text{for any}\quad z_0>0, \\
		\lim_{z_0\to\infty} \hat{x}(y_0,z_0) = 0, \quad
	\lim_{z_0\downarrow0} \hat{x}(y_0,z_0) = \hat{x}_c(y_0) \quad\text{for any}\quad y_0>0. 
\end{gather*}
The function $\hat{x}_c(y_0)$ is strictly increasing and
\[
	\lim_{y_0\to0} \hat{x}_c (y_0) = 0, \quad
	\lim_{y_0\to\infty} \hat{x}_c (y_0) = \infty.
\]
All dependence with respect to $y_0$ and $z_0$ are analytic.
\end{lemma}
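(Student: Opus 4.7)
The plan is to solve the linear ODE explicitly and reduce the four-condition system to a single scalar equation in $\hat x$. Since the characteristic polynomial of the homogeneous part is $a r^2 - b r = 0$, the general solution of $ay'' - by' + c = 0$ takes the form
\[
y(x) = A + B e^{\lambda x} + \frac{c}{b}\, x, \qquad \lambda := b/a > 0.
\]
Using $y(\hat x) = 0$ and $y'(\hat x) = z_0$ to eliminate $A$ and $B$, and then imposing $y(0) = -y_0$, reduces the system to the single transcendental equation
\[
F(\hat x; y_0, z_0) := \frac{c}{a}\, \hat x + \Bigl(z_0 - \frac{c}{b}\Bigr)\bigl(1 - e^{-\lambda \hat x}\bigr) - \frac{b}{a}\, y_0 = 0.
\]

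Existence and uniqueness will follow from strict monotonicity of $F$ in $\hat x$. One has $F(0; y_0, z_0) = -(b/a) y_0 < 0$, and $F \to +\infty$ as $\hat x \to \infty$ because $(c/a)\hat x$ dominates the bounded exponential correction. The crux is the sign of
\[
\frac{\partial F}{\partial \hat x} = \frac{1}{a}\bigl[c + (b z_0 - c)\, e^{-\lambda \hat x}\bigr].
\]
In the case $b z_0 \geq c$ this is obviously positive; in the case $b z_0 < c$ the bracket is monotone increasing in $\hat x$ and equals $b z_0 > 0$ at $\hat x = 0$, hence remains strictly positive. The intermediate value theorem then yields a unique root $\hat x(y_0, z_0) > 0$, and the analytic implicit function theorem delivers real-analytic dependence on $(y_0, z_0)$. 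The signs $\partial F/\partial y_0 = -b/a < 0$ and $\partial F/\partial z_0 = 1 - e^{-\lambda \hat x} > 0$ then give the strict monotonicities $\partial \hat x/\partial y_0 > 0$ and $\partial \hat x/\partial z_0 < 0$.

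The four asymptotic statements are all read off from $F = 0$. As $y_0 \to \infty$, the bound $|(z_0 - c/b)(1 - e^{-\lambda \hat x})| \leq |z_0 - c/b|$ forces $(c/a) \hat x \to \infty$. As $y_0 \downarrow 0$, the equation already holds at $\hat x = 0$, $y_0 = 0$, so continuity of the unique root gives $\hat x \to 0$. As $z_0 \to \infty$, rewriting $F = 0$ as $(z_0 - c/b)(1 - e^{-\lambda \hat x}) = (b/a) y_0 - (c/a) \hat x$ forces $1 - e^{-\lambda \hat x} \to 0$ since the right-hand side stays bounded, hence $\hat x \to 0$. As $z_0 \downarrow 0$, continuity of the implicit root gives $\hat x(y_0, z_0) \to \hat x_c(y_0)$, the unique positive root of $F(\cdot; y_0, 0) = 0$; the same IVT/IFT argument, now using $\partial F/\partial \hat x = (c/a)(1 - e^{-\lambda \hat x}) > 0$ at $z_0 = 0$ for $\hat x > 0$, supplies its existence, uniqueness, analyticity, strict monotonicity, and the limits $\hat x_c(y_0) \to 0$ as $y_0 \to 0$ and $\hat x_c(y_0) \to \infty$ as $y_0 \to \infty$.

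The main obstacle, such as it is, lies only in the sign analysis of $\partial F/\partial \hat x$ through the case split $b z_0 \geq c$ versus $b z_0 < c$; once strict positivity is established, every remaining claim is a routine consequence of the analytic implicit function theorem and direct asymptotic inspection of the defining equation.
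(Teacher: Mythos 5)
Your proposal is correct and takes essentially the same route as the paper: both solve the linear ODE explicitly and reduce the free-boundary problem to the single transcendental relation between $\hat{x}$, $y_0$, $z_0$ (your $F=0$ is, after the paper's normalization $b=c=1$, exactly the paper's equation expressing $z_0$ in terms of $\hat{x}$ and $y_0$), and both conclude by monotonicity in $\hat{x}$ plus the intermediate value theorem and the implicit function theorem. The only cosmetic difference is that you keep the relation as one implicit function $F$ and read all monotonicities and limits off its partial derivatives, whereas the paper inverts it as $z_0=z_0(\hat{x})$ and treats the $y_0$-dependence by a separate rearrangement.
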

\begin{proof}
We may assume that $c=1$ by dividing both sides of the first equation of \eqref{EFPD} by $c$.
By scaling $y_b(x)=y(bx)$, we may assume $b=1$.
The resulting ODE is
\[
	y' - ay'' - 1 =0
\]
and its general solution is of the form
\[
	y(x) = x + c_2 e^{x/a} +c_1
\]
with constant $c_1$ and $c_2$.
We shall determine $c_1$, $c_2$, $\hat{x}$ so that $y(\hat{x})=0$, $y(0)=-y_0$, $y'(\hat{x})=z_0$, namely
\begin{align*}
	0 &= \hat{x} + c_2 e^{\hat{x}/a} + c_1, \\
	-y_0 &= c_1 + c_2, \\
	z_0 &= 1 + \frac{c_2}{a} e^{\hat{x}/a}.
\end{align*}
From the first two equations,
\[
	c_2 = \frac{y_0 - \hat{x}}{e^{\hat{x}/a} -1}.
\]
Thus,
\begin{equation} \label{EZF}
	z_0 = 1 + \frac{y_0-\hat{x}}{a(e^{\hat{x}/a}-1)} e^{\hat{x}/a}.
\end{equation}
We shall fix $y_0>0$.
As a function of $\hat{x}$, $\frac{dz_0}{d\hat{x}}<0$, in particular, $z_0$ is strictly decreasing in $\hat{x}$.
Indeed, we set $\tau=e^{\hat{x}/a}-1$ so that $\hat{x}=a\log(\tau+1)$.
By \eqref{EZF}, $z_0(\hat{x})$ is of the form
\[
	z_0(\hat{x}) = 1+\frac1a f(\tau), \quad
	f(\tau) = \left(y_0 - a \log(\tau+1)\right)(\tau+1)/\tau.
\]
Since
\begin{align*}
	f'(\tau) &= -\frac{a}{\tau+1} \left( \frac{\tau+1}{\tau} \right) - \frac1\tau^2 \left( y_0 - a\log(\tau+1) \right) \\
	&= -\frac1\tau \left( a + \frac{y_0 - a\log(\tau+1)}{\tau} \right)
\end{align*}
and $\tau-\log(\tau+1)\geq0$, we observe that
\[
	f'(\tau) \leq -\frac1\tau \frac{y_0}{\tau} < 0.
\]
Thus $\frac{dz_0}{d\hat{x}}<0$ everywhere.

From \eqref{EZF}, it follows that
\begin{equation} \label{EZP}
	\lim_{\hat{x}\downarrow0} z_0 (\hat{x}) = \infty, \quad
	\lim_{\hat{x}\to\infty} z_0 (\hat{x}) = -\infty.
\end{equation}
By strictly monotonicity of $z_0$ in $\hat{x}$, we now conclude that for a given $y_0$ and $z_0$, there is a unique $\hat{x}>0$ and $y$ satisfying \eqref{EFPD}.
 The function $\hat{x}_c(y_0)$ is defined as $z_0=0$ in \eqref{EZF}, i.e.,
\[
	1 + \frac1a \frac{y_0 - \hat{x}_c}{(e^{\hat{x}_c/a}-1)} e^{\hat{x}_c/a} = 0.
\]
The properties of $\hat{x}(y_0,z_0)$ in Lemma \ref{LUEFP} for fixed $y_0>0$ easily follows by monotonicity of $z_0$ and \eqref{EZP}.
Regularity issues are clear by explicit formulas.

It remains to study the dependence of $y_0$ in $ \hat{x}(y_0,z_0)$ for a fixed $z_0$.
By \eqref{EZF}, we see that
\begin{equation} \label{EZFF}
	\frac{(z_0-1)\tau}{\tau+1} + \log(\tau+1) = \frac{y_0}{a},
\end{equation}
where $\tau=e^{\hat{x}/a}-1$.
Differentiating the left-hand side in $\tau$, we get
\[
	\frac{(z_0-1)}{(\tau+1)^2} + \frac{1}{\tau+1} \geq \frac{1}{\tau+1} \left(1-\frac{1}{\tau+1} \right)
	\quad\text{for}\quad z_0 \geq 0,\ \tau>0.
\]
Thus, strictly monotonicity property of $\hat{x}$ and $\hat{x}_c$ with respect to $y_0$ is obtained since the left hand side of \eqref{EZFF} is strictly increasing and $0$ at $\tau=0$ and $\infty$ as $\tau\to\infty$.
(Note that $\hat{x}_c(y_0)=\hat{x}(y_0,0)$.)

The graphs of solutions to (\ref{EFPD}) are illustrated in Figure \ref{FGSoL}. 
\begin{figure}[htbp]
\centering 
\includegraphics[keepaspectratio, scale=0.3]{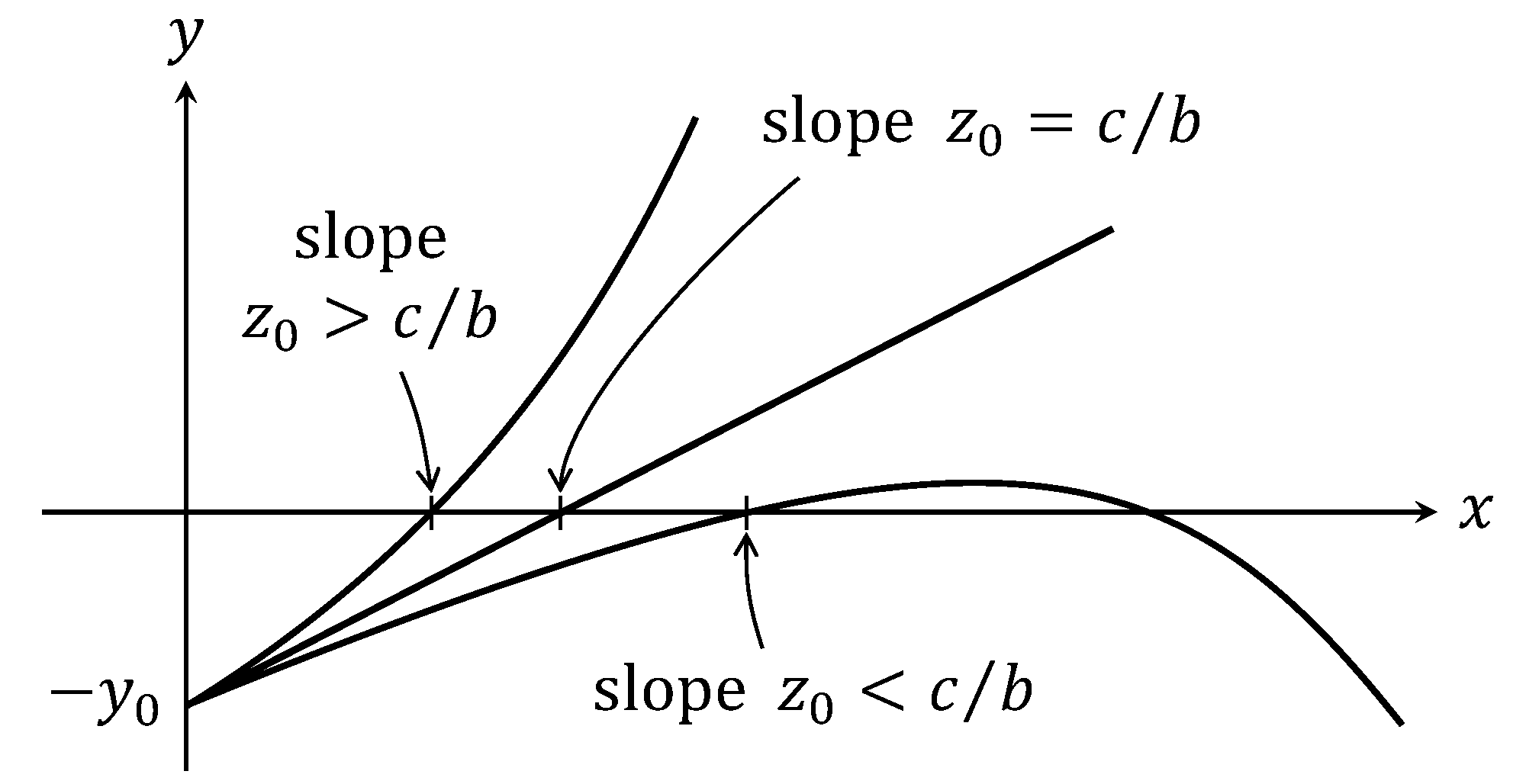} 
\caption{graphs of solutions\label{FGSoL}}
\end{figure}
\end{proof}
%
\begin{proof}[Proof of Theorem \ref{TUES}]
If $\theta_2$ solves \eqref{EEH2}, then it is of the form
\[
	\theta_2(x_1) = \frac{r}{\kappa_2 j_\Gamma} x_1 + c_1 + c_2 e^{\alpha x_1}, \quad
	\alpha = \frac{\kappa_2 j_\Gamma}{d_2}
\]
with some constant $c_1$ and $c_2$.
Under the linear growth assumption, $c_2=0$.
Thus
\[
	\theta_2(x_1) = \frac{r}{\kappa_2 j_\Gamma} (x_1 - x_*) + \theta_*
\]
if $\theta_2(x_*)=\theta_*$.
The Stefan condition \eqref{EEI4S} is now of the form
\begin{align*}
	(-\ell) j_\Gamma &= d_2 \partial_{x_1} \theta_2 - d_1\partial_{x_1}\theta_1 \\
	&= \frac{d_2 r}{\kappa_2 j_\Gamma} - d_1\partial_{x_1}\theta_1.
\end{align*}
Note that $\partial_{x_1}\theta_1(x_*)\geq0$ since $\theta_1\leq\theta_*$.
Thus, 
if a solution $(\theta_1,\theta_2,x_*)$ exists, we must have
\[
-\ell \leq \frac{d_2 r}{\kappa_2 j_\Gamma^2}.
\]

It remains to prove that this condition is sufficient to have a solution.
We may assume $\theta_*=0$ by considering $\theta-\theta_*$.
Our problem is reduced to Lemma \ref{LUEFP} by setting
\begin{align*} 
	&b= \kappa_1 j_\Gamma, \quad a = d_1, \quad c = r, \quad y_0 = \theta_\mathrm{in}, \\
	&z_0 = \frac{1}{d_1} \left( \frac{d_2 r}{\kappa_2 j_\Gamma} + \ell j_\Gamma \right).
\end{align*}
If we set $x_*=\hat{x}$, then all required properties follows from Lemma \ref{LUEFP}.
\end{proof}
\vspace{0.5cm}
\noindent
{\bf Acknowledgments.}\
This work was done as a part of research activities of Social Cooperation Program ``Mathematical Sciences for Refrigerant Thermal Fluids'' sponsored by Daikin Industries, Ltd.\ at the University of Tokyo. The authors are grateful to members of the Technology and Innovation Center of Daikin Industries, Ltd.\ for showing several interesting phenomena related to dryout points with fruitful discussion which triggered this work.
 The work of the first author was partly supported by the Japan Society for the Promotion of Science (JSPS) through the grants Kakenhi: No.~20K20342, No.~19H00639, and by Arithmer Inc., Daikin Industries, Ltd.\ and Ebara Corporation through collaborative grants.


\begin{thebibliography}{XXX}
%
\bibitem[BCD]{BCD}
V.\ Barbu, I.\ Ciotir and I.\ Danaila, 
Existence and uniqueness of solution to the two-phase Stefan problem with convection. 
Appl.\ Math.\ Optim.\ {\bf84} (2021), S123--S157.
%
\bibitem[BP]{BP}
D.\ Blanchard and A.\ Porretta, 
Stefan problems with nonlinear diffusion and convection. 
J.\ Differential Equations {\bf210} (2005), no.\ 2, 383--428.
%
\bibitem[BL]{BL}
M.\ Boukrouche and G.\ {\L}uukaszewicz, 
The stationary Stefan problem with convection governed by a non-linear Darcy's law. 
Math.\ Methods Appl.\ Sci.\ {\bf22} (1999), no.\ 7, 563--585. 
%
\bibitem[IH]{IH}
M.\ Ishii and T.\ Hibiki, 
Thermo-fluid dynamics of two-phase flow. 
Second edition. With a foreword by Lefteri H.\ Tsoukalas, 
Springer, New York, 2011. xviii+518 pp.

%
\bibitem[Na]{Na}
G.\ F.\ Naterer,  
Advanced heat transfer. 3rd Edition, CRC Press, 2022.
%
\bibitem[PSh]{PSh}
J.\ Pr\"uss and S.\ Shimizu, 
Modeling of two-phase flows with and without phase transitions. 
\emph {Handbook of mathematical analysis in mechanics of viscous fluids,} 1007--1044.
Springer, Cham, 2018.
%
\bibitem[PS]{PS}
J.\ Pr\"uss and G.\ Simonett, 
Moving interfaces and quasilinear parabolic evolution equations.
Monogr.\ Math., 105, 
\emph{Birkh\"auser/Springer, [Cham],} 2016. xix+609 pp.
%
\bibitem[T]{T}
D.\ A.\ Tarzia, 
A bibliography on moving-free boundary problems for the heat-diffusion equation. The Stefan and related problems.
MAT Ser.\ A Conf.\ Semin.\ Trab.\ Mat.\ 2, 
\emph{Universidad Austral, Facultad de Ciencias Empresariales (FCE-UA), Departamento de Matem\'atica, Rosario,} 2000. 16 pp.
%
\bibitem[V]{V}
A.\ Visintin, 
Models of phase transitions. 
Progr.\ Nonlinear Differential Equations Appl., 28, 
\emph{Birkh\"auser Boston, Inc., Boston, MA,} 1996, x+322 pp.
%
\bibitem[Wa]{Wa}
K.\ Watanabe, 
Local well-posedness of incompressible viscous fluids in bounded cylinders with $90^\circ$-contact angle. 
Nonlinear Anal.\ Real World Appl.\ {\bf65} (2022), Paper No.\ 103489, 54 pp.
%
\bibitem[Wi]{Wi}
M.\ Wilke, 
The two-phase Navier-Stokes equations with surface tension in cylindrical domains. 
Pure Appl.\ Funct.\ Anal.\ {\bf5} (2020), no.\ 1, 121--201.

\end{thebibliography}
\end{document}